\renewcommand{\pod}[1]{\allowbreak\mathchoice
	{\if@display \mkern 18mu\else \mkern 8mu\fi (#1)}
	{\if@display \mkern 18mu\else \mkern 8mu\fi (#1)}
	{\mkern4mu(#1)}
	{\mkern4mu(#1)}
}
\renewcommand*{\@fnsymbol}[1]{\@alph{#1})}
\title{On Certain Degenerate Whittaker Models for Cuspidal Representations of $\mathrm{GL}_{k\cdot n}(\bff_q)$}
\date{}
\author{Ofir Gorodetsky\thanks{ofir.goro@gmail.com} \,}
\author{Zahi Hazan\thanks{zahi.hazan@gmail.com}}
\affil{Raymond and Beverly Sackler School of Mathematical Sciences, Tel Aviv University, street, 6997801, Tel Aviv, Israel}
\numberwithin{equation}{section}
\newcommand{\unimat}{U}
\newcommand{\bff}{\mathbb{F}}
\newcommand{\piknpsi}[1]{\pi_{{#1},N,\psi}}
\newcommand{\gl}[2][\bff]{\mathrm{GL}_{#2}(#1)}
\newcommand{\induce}[3][\gl{3n}]{\mathrm{Ind}_{#2} ^{#1} (#3)}
\theoremstyle{plain}
\newtheorem{seclem}{Lemma}[section]
\newtheorem{intTheorem}{Theorem}
\newtheorem{Theorem}[seclem]{Theorem}
\newtheorem{cor}[seclem]{Corollary}
\newtheorem{prop}[seclem]{Proposition}
\theoremstyle{definition}
\newtheorem{Def}[seclem]{Definition}
\theoremstyle{remark}
\newtheorem{Rem}[seclem]{Remark}
\begin{document}
	
\maketitle

\begin{abstract}
Let $\pi$ be an irreducible cuspidal representation of $\gl[\bff_q]{kn}$. Assume that $\pi = \pi_{\theta}$, corresponds to a regular character $\theta$ of $\bff_{q^{kn}}^{*}$. We consider the twisted Jacquet module of $\pi$ with respect to a non-degenerate character of the unipotent radical corresponding to the partition $(n^k)$ of $kn$. We show that, as a $\gl[\bff_q]{n}$-representation, this Jacquet module is isomorphic to $\pi_{\theta \upharpoonright_{\bff_n^*}} \otimes \mathrm{St}^{k-1}$, where $\mathrm{St}$ is the Steinberg representation of $\gl[\bff_q]{n}$. This generalizes a theorem of D. Prasad, who considered the case $k=2$.
We prove and rely heavily on a formidable identity involving $q$-hypergeometric series and linear algebra.
\end{abstract}

\section{Introduction} \label{introduction} \label{sec:intro}
Let $\bff:=\bff_q$ be a finite field of size $q$.  We will fix a nontrivial character $\psi_0$ of $\bff$. Denote by $\bff_m:=\bff_{q^m}$ the unique degree $m$ field extension of $\bff$.  Let $k$ be a positive integer. Denote the diagonal subgroup of $\left(\gl{\ell}\right)^r$ by
\begin{equation}
	\Delta^r\left(\gl{\ell}\right):=\left\{\left. \left(g,\ldots,g\right)\in \left(\gl{\ell}\right)^r\ \right| g\in \gl{\ell} \right\}.
\end{equation}
For a partition $\rho=\left(k_1,k_2,\ldots,k_s\right)$ of $\ell$, consider the corresponding standard parabolic subgroup $P_{\rho}$ of $\gl{\ell}$, and $M_{\rho}$, $N_{\rho}$ be the corresponding Levi part and unipotent radical.

We begin by describing a theorem of Prasad \cite[Thm.~1]{prasad2000}. Here we consider $\gl{2n}$, and denote $P^\prime=P_{n,n}$, $M^\prime = M_{n,n}$ and $N^\prime =N_{n,n}$. Consider the following character $\psi^\prime$ of $N^\prime$,
\begin{equation*}
\psi^\prime \left( \begin{pmatrix}
I_n & X \\ 0 & I_n
\end{pmatrix} \right) :=\psi_{0}\left(\mathrm{tr}\left(X\right)\right).
\end{equation*}
Let $\pi^\prime$ be an irreducible representation of $\gl{2n}$ acting on a space $V_{\pi^\prime}$.
Define
\begin{equation*}
{V_{\pi^\prime_{N^\prime,\psi^\prime}}}:=\left\{ v\in V_{\pi^\prime}\ \left|\ \pi^\prime(u)v=\psi^\prime(u)v,\ \forall u\in N^\prime\right. \right\}.
\end{equation*}
This is the $\left(N^\prime,\psi^\prime \right) $-isotypic subspace of $V_{\pi^\prime}$. We know that $V_{\pi^\prime_{N^\prime,\psi^\prime}}$ is the image of the canonical projection of $V_{\pi^\prime}$ on $V_{\pi^\prime_{N^\prime,\psi^\prime}}$ given by
\begin{equation*}
P_{N^\prime,\psi^\prime}\left(v\right)=\frac{1}{\left|N^\prime\right|}\sum_{u\in N^\prime} \overline{\psi^\prime}\left(u\right)\pi^\prime(u)v.
\end{equation*}
Since $\mathrm{tr}\left(gXg^{-1}\right)=\mathrm{tr}\left(X\right)$ for all $g\in \gl{n}$, and by identifying  $\Delta^2\left(\gl{n}\right)$ with $\gl{n}$, it follows that $V_{\pi^\prime_{N^\prime,\psi^\prime}}$ is a representation space for $\gl{n}$. The space $V_{\pi^\prime_{N^\prime,\psi^\prime}}$
is referred to as the twisted Jacquet module of the space $V_{\pi^\prime}$ with respect to $\left(N^\prime,\psi^\prime\right)$. Prasad proved the following theorem.
\begin{Theorem}\cite[Thm.~1]{prasad2000}\label{thmprasad}
Let $\pi^\prime$ be an irreducible cuspidal representation of $\gl{2n}$ obtained from a character $\theta$ of $\bff^*_{2n}$. Then
\begin{equation}
   \pi^\prime_{N^\prime,\psi^\prime} \cong \mathrm{Ind}_{\bff^*_n}^{\gl{n}}\theta\upharpoonright_{\bff_n^*}.
\end{equation}
\end{Theorem}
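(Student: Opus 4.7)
The natural plan is to match characters. Writing $\Delta(g) := \mathrm{diag}(g,g) \in \gl{2n}$ and $u(X) := \begin{pmatrix} I_n & X \\ 0 & I_n \end{pmatrix}$, the projection $P_{N^\prime,\psi^\prime}$ gives
\[
\Theta(g) := \chi_{V_{\pi^\prime_{N^\prime,\psi^\prime}}}(g) = \frac{1}{|N^\prime|}\sum_{X \in M_n(\bff)}\overline{\psi_0(\mathrm{tr}(X))}\,\chi_{\pi_\theta}\!\begin{pmatrix} g & gX \\ 0 & g \end{pmatrix}.
\]
Since $\chi_{\pi_\theta}$ vanishes on elements whose semisimple part is not conjugate into the elliptic torus $T = \bff_{2n}^* \subset \gl{2n}$, and the semisimple part of $\Delta(g)u(X)$ has characteristic polynomial $(\text{char.\ poly.\ of }g)^2$, both $\Theta$ and the character $\Theta_{\mathrm{ind}}$ of $\induce[\gl{n}]{\bff_n^*}{\theta\upharpoonright_{\bff_n^*}}$ turn out to be supported on conjugates of regular $\alpha \in \bff_n^*$.

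For such $\alpha$ (with $\bff[\alpha] = \bff_n$), conjugation by $u(-Y)$ in $\gl{2n}$ changes $X$ to $X + Y - g^{-1}Yg$ while preserving both $\mathrm{tr}(X)$ (because $\mathrm{tr}(Y - g^{-1}Yg) = 0$) and the character value. The operator $\phi\colon Y \mapsto Y - g^{-1}Yg$ on $M_n(\bff)$ is diagonalizable (with eigenvalues $1 - \lambda_i/\lambda_j$ over the $n$ distinct eigenvalues $\lambda_i$ of $g$), has kernel $\bff_n = C_{M_n(\bff)}(\alpha)$, and its image gives a complement. The sum thus collapses to
\[
\Theta(\alpha) = \frac{1}{q^n}\sum_{X_0 \in \bff_n}\overline{\psi_0(\mathrm{tr}(X_0))}\,\chi_{\pi_\theta}\!\big(\Delta(\alpha)\cdot u(X_0)\big),
\]
where now $\Delta(\alpha)u(X_0)$ has clean Jordan decomposition: semisimple part $\Delta(\alpha) \in T$, and unipotent part $u(X_0)$ lying in the connected centralizer $C^{\circ}_{\gl{2n}}(\Delta(\alpha)) \cong \mathrm{GL}_2(\bff_n)$ (identifying $\bff^{2n} = \bff_n^2$).

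The Deligne--Lusztig character formula, combined with $\pi_\theta = (-1)^{2n-1} R_T^\theta = -R_T^\theta$ and the fact that the orbit of $\alpha$ under $\mathrm{Gal}(\bff_{2n}/\bff)$ has size $n$, then gives
\[
\chi_{\pi_\theta}\!\big(\Delta(\alpha)u(X_0)\big) = -\Big(\sum_{\sigma \in \mathrm{Gal}(\bff_n/\bff)}\theta(\sigma(\alpha))\Big)\cdot Q_{\bff_{2n}^*}^{\mathrm{GL}_2(\bff_n)}\!\big(u(X_0)\big),
\]
where the Green function of $\mathrm{GL}_2(\bff_n)$ for its elliptic torus (determined by $R^{1}_{T_{\mathrm{ell}}} = 1 - \mathrm{St}$) evaluates to $1 - q^n$ at $I$ and to $1$ on any nontrivial unipotent. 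Summing over $X_0$ and using $\sum_{X_0 \in \bff_n^*}\overline{\psi_0(\mathrm{tr}_{\bff_n/\bff}(X_0))} = -1$, the bracket becomes $(1-q^n) + (-1) = -q^n$, yielding
\[
\Theta(\alpha) = \sum_{\sigma \in \mathrm{Gal}(\bff_n/\bff)}\theta(\sigma(\alpha)) = \Theta_{\mathrm{ind}}(\alpha).
\]

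The main obstacle is the support-vanishing statement: one must also verify $\Theta(g) = 0$ when the semisimple part of $g$ has characteristic polynomial $p^{n/d}$ for $1 \le d < n$, or when $g$ is non-semisimple. In those cases the $X_0$-summation runs over the larger centralizer algebra $M_{n/d}(\bff_d)$, and cancellation must come from non-trivial identities among Green polynomials of $\mathrm{GL}_{2n/d}(\bff_d)$---a baby instance of the ``formidable identity'' cited in the abstract.
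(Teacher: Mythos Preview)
Your regular-semisimple computation is correct and the conjugation-by-$u(Y)$ collapse to the centralizer is a nice structural alternative to the explicit rank manipulations in the paper. The genuine error is the support claim. Neither $\Theta$ nor $\Theta_{\mathrm{ind}}$ vanishes on semisimple $s$ coming from a proper subfield $\bff_d \subsetneq \bff_n$. Already at a scalar $g=\lambda I_n$ one has $\Theta_{\mathrm{ind}}(\lambda I_n)=\theta(\lambda)\,|\gl{n}|/(q^n-1)\neq 0$, and $\Theta(\lambda I_n)=\theta(\lambda)\cdot\dim V_{\pi'_{N',\psi'}}$. In general (compare the $k=2$ case of Theorem~\ref{int_char_thm} with Lemma~\ref{lemindch}) both characters take the common \emph{nonzero} value
\[
\Bigl[\sum_{i=0}^{d-1}\theta(\lambda^{q^i})\Bigr]\cdot\frac{|\gl[\bff_d]{n/d}|}{q^n-1}
\]
at such $s$. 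So the plan in your last paragraph, ``verify $\Theta(g)=0$ when $1\le d<n$'', is aimed at the wrong target: you must compute and match these nonzero values.

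Your own reduction does extend to this case---for $s$ from $\bff_d$ the kernel of $\phi$ is the centralizer $M_{n/d}(\bff_d)$, and the collapsed sum lands in $C_{\gl{2n}}(\Delta(s))\cong\gl[\bff_d]{2n/d}$---but now the Green function involved is $Q^{\gl[\bff_d]{2n/d}}_{\bff_{2n}^*}$ at unipotents of all ranks, and the resulting weighted sum over $M_{n/d}(\bff_d)$ is precisely the identity that Prasad (and this paper, via Proposition~\ref{fkprop} and \eqref{prasadres}) proves by explicit $q$-series work; it does not drop out of the $\mathrm{GL}_2$ calculation. The non-semisimple vanishing is yet another issue: there $\phi$ is no longer semisimple, so $\ker\phi$ is not a complement to $\mathrm{Im}\,\phi$ and your collapse breaks down. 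The paper handles this case (\S\ref{nonsemsub}) by isolating one free $\bff_d$-coordinate on which only $\overline{\psi}$ depends, forcing cancellation; your proposal leaves this entirely open.
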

Prasad proved this theorem by an explicit calculation of the characters of  $\pi^\prime_{N^\prime,\psi^\prime}$ and of the induced representation $\mathrm{Ind}_{\bff^*_n}^{\gl{n}}\theta\upharpoonright_{\bff_n^*}$. At any element of $\gl{n}$ the characters are the same. Therefore, the two representations are equivalent.

\vspace{9pt}
Fix $k \ge 1$. Let $\rho=(n^k)$ be the partition of $kn$ consisting of $k$ parts of size $n$. In this paper we denote $G:=\gl{kn}$, $P=P_{\rho}$,  $M=M_{\rho}$ and $N=N_{\rho}$.
We have the Levi decomposition $P=M\ltimes N$. We write $\unimat \in N$ in the form
\begin{equation}\label{udef}
	\quad \unimat= \begin{pmatrix}
		I_{n} & X_{1,1} & X_{1,2} & \cdots&X_{1,k-2}&X_{1,k-1} \\
		0 & I_{n} & X_{2,2} & \cdots &X_{2,k-2}&X_{2,k-1} \\
		0& 0 & I_{n} & \cdots &X_{3,k-2}& X_{3,k-1}\\
		\vdots&\vdots&\vdots&&\vdots&\vdots\\
		0 & 0 & 0& \cdots & I_n &X_{k-1,k-1}\\
		0 & 0 & 0& \cdots & 0 &I_n
	\end{pmatrix},
\end{equation}
where the matrices $X_{i,j}$ ($1\leq i\leq j \leq k-1$) are elements of $M_n(\bff)$.

\begin{Def}
A character $\psi:N\to\mathbb{C}^*$ is said to be non-degenerate if it has the form
$$\psi \left( \unimat \right) :=\psi_{0}\left(\mathrm{tr}\left(\sum_{i=1}^{k-1} A_i X_{i,i}\right)\right)=\prod_{i=1}^{k-1}\psi_{0}\left(\mathrm{tr}\left(A_i X_{i,i}\right)\right),$$
where the matrices $A_i$ are invertible.
\end{Def}
Let $\psi:N\to\mathbb{C}^{*}$ be a non-degenerate character. Let $\pi$ be an irreducible representation of $G$, acting on a space $V_\pi$. We will denote by $V_{\pi_{k,N,\psi}}$ the largest subspace of $V_\pi$, on which $N$ operates through $\psi$, i.e.
\begin{equation}
{V_{\pi_{k,N,\psi}}}=\left\{ v\in V_{\pi}\ \left|\ \pi(\unimat)v=\psi(\unimat)v,\ \forall \unimat \in N\right. \right\}.
\end{equation}
This is the $\left(N,\psi \right) $-isotypic subspace of $V_{\pi}$. Same as before, $V_{\pi_{k,N,\psi}}$ is the image of the canonical projection of $V_\pi$ on $V_{\pi_{k,N,\psi}}$ given by
\begin{equation} \label{projform}
P_{k,N,\psi}\left(v\right)=\frac{1}{\left|N\right|}\sum_{\unimat\in N}\overline{\psi}\left(\unimat \right)\pi(\unimat)v.
\end{equation}
Since $M$ normalizes $N$, it acts on the characters of $N$ as follows. If $m \in M$, then for all $u \in N$
\begin{equation*}
(m\cdot \psi)(\unimat)=\psi \left(m^{-1}\unimat m\right).
\end{equation*}
We have, for $m\in M$,
\begin{equation*}
\pi(m)V_{\pi_{k,N,\psi}}=V_{\pi_{k,N,m\cdot\psi}}.
\end{equation*}
Let us compute the stabilizer of $\psi$ in $M$. We write
\begin{equation*}
m=\begin{pmatrix}
B_{1}&0&\cdots & 0\\
0&B_{2}&\cdots & 0\\
\vdots
&\vdots &&\vdots\\
0&0& \cdots& B_{k}
\end{pmatrix},\quad \forall 1\leq i\leq k: B_{i}\in \gl{n},
\end{equation*}
Then
\begin{equation*}
(m\cdot \psi)(\unimat)=\psi_0 \left(\text{tr}\left( \sum_{i=1}^{k-1}A_i B_i^{-1}  X_{i,i}B_{i+1}\right)\right).
\end{equation*}
Thus, $m\cdot \psi=\psi$ iff $B_i=B_{i+1}$ for all $1 \le i \le k-1$. In other words,
\begin{equation*}
\text{stab}_M\psi = \Delta^k\left(\gl{n}\right) \cong \gl{n}.
\end{equation*}
 Therefore, $V_{\pi_{k,N,\psi}}$ is a $\gl{n}$-module. We denote by $\pi_{k,N,\psi}$ the resulting representation of $\gl{n}$ in $V_{\pi_{k,N,\psi}}$. It is easy to see that by conjugation with an element in the Levi part, we may simply take all the $A_i$ to be the identity matrix. The corresponding twisted Jacquet modules are isomorphic. In the rest of the paper we assume $A_i=I_n$ and fix
 $\psi \left( \unimat \right) :=\psi_{0}\left(\mathrm{tr}\left(\sum_{i=1}^{k-1} X_{i,i}\right)\right)$.

 The goal of this paper is to calculate the character of $\pi_{k,N,\psi}$, and to describe it by known representations, for an irreducible, cuspidal representation $\pi=\pi_\theta$ of $\gl{kn}$, associated to a regular character $\theta$ of $\bff_{kn}^*$. The paper generalizes Prasad's result for the case $k=2$.
 The methods used in this paper are generalizations of the methods used by the second author in his thesis \cite{hazan2016} for the case $k=3$. From the character calculation, done in Theorem \ref{int_char_thm}, we were able to describe in Theorem \ref{mainthrm} $\pi_{k,N,\psi}$ by the representations $\mathrm{Ind}_{\bff^*_{\ell}} ^{\gl{n}} \theta \upharpoonright _{\bff^*_{\ell}}$, where $\ell \mid n$. Furthermore, we give a compact description of $\pi_{k,N,\psi}$ by the Steinberg representation in the following theorem.
\begin{intTheorem} \label{st_thm}
	Let $k\ge 1$. Let $\pi_\theta$ be an irreducible cuspidal representation of $\gl{kn}$ obtained from a character $\theta$ of $\bff^*_{kn}$.
	Then
	\begin{equation} \label{st_cong}
	\piknpsi{k}\cong \pi_{\theta\upharpoonright_{\bff^*_n}}\otimes \mathrm{St}^{k-1},
	\end{equation}
	where $\pi_{\theta\upharpoonright_{\bff^*_n}}$ is the irreducible cuspidal representation of $\gl{n}$ obtained from $\theta\upharpoonright_{\bff^*_n}$ and $\mathrm{St}$ is the Steinberg representation of $\gl{n}$.
\end{intTheorem}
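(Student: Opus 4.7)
The plan is to deduce Theorem \ref{st_thm} from the decomposition of $\piknpsi{k}$ established in Theorem \ref{mainthrm}, which expresses $\piknpsi{k}$ as an explicit integer combination of the representations $\mathrm{Ind}_{\bff_\ell^*}^{\gl{n}}\theta\upharpoonright_{\bff_\ell^*}$ as $\ell$ ranges over divisors of $n$. Since the irreducible characters of $\gl{n}$ form a basis of the space of class functions, it suffices to verify equality of the characters of both sides of \eqref{st_cong} at every conjugacy class $g\in\gl{n}$.

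For the right-hand side, the character factorizes as $\chi_{\pi_{\theta\upharpoonright_{\bff_n^*}}}(g)\cdot \mathrm{St}(g)^{k-1}$. The first factor is given by Green's character formula, expressing it as a sum of values of $\theta\upharpoonright_{\bff_n^*}$ on Galois orbits of eigenvalues of $g$; the Steinberg factor $\mathrm{St}(g)$ has a standard closed-form expression on each conjugacy class (a signed power of $q$ supported on classes whose centralizer is suitably large). For the left-hand side, Frobenius reciprocity lets one evaluate each $\mathrm{Ind}_{\bff_\ell^*}^{\gl{n}}\theta\upharpoonright_{\bff_\ell^*}$ at $g$ by summing $\theta\upharpoonright_{\bff_\ell^*}(\alpha)$ over the embeddings $\bff_\ell^*\hookrightarrow\gl{n}$ whose conjugates meet $g$. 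Combining these expansions with the coefficients supplied by Theorem \ref{mainthrm} reduces the equality of characters to a single identity, for each conjugacy class, between a weighted sum over $\ell\mid n$ of Galois traces of $\theta$ on the corresponding subfields and $\mathrm{St}(g)^{k-1}$ times the Galois trace of $\theta\upharpoonright_{\bff_n^*}$.

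The main obstacle, as flagged in the abstract, is precisely this identity: it mixes Gaussian binomials (coming from the linear-algebra count of intermediate subspaces of $\bff^n$ parameterizing the relevant unipotent data), the alternating Steinberg sign pattern, and an inclusion--exclusion over the divisor lattice of $n$. I would attempt to reduce it to a $q$-hypergeometric identity between sums involving Gaussian binomials, verify it directly in low degree (noting that $k=2$ recovers Prasad's Theorem \ref{thmprasad} since $\mathrm{St}^{0}$ is trivial, and $k=3$ matches the second author's thesis case \cite{hazan2016}), and extend to general $k$ either by induction or by a generating-function argument that encodes the Steinberg power as a formal parameter. Establishing this identity in the generality needed is expected to be the delicate combinatorial--linear-algebraic core of the proof, and the rest of the argument is then a bookkeeping reduction that packages the character identity into the isomorphism \eqref{st_cong}.
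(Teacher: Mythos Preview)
Your route through Theorem~\ref{mainthrm} is not wrong, but it is a detour, and you misplace the hard step. In the paper, Theorem~\ref{mainthrm} is itself deduced from the explicit character formula of Theorem~\ref{int_char_thm}; the paper then proves Theorem~\ref{st_thm} \emph{directly} from Theorem~\ref{int_char_thm}, not via Theorem~\ref{mainthrm}. Concretely, for $g$ not semisimple or not coming from $\bff_n$ both sides of \eqref{st_cong} have vanishing character. For $g=s$ semisimple coming from $\bff_d$ with $d'=n/d$, one computes $\mathrm{St}(s)=(-1)^{n-d'}q^{n(d'-1)/2}$ from the standard formula $\mathrm{St}(s)=\varepsilon_{\gl[\overline{\bff}]{n}}\varepsilon_{C(s)^\circ}\,|C(s)^{\bff}|_{p}$, and Green's formula for $\Theta_{\theta\upharpoonright_{\bff_n^*}}(s)$ gives $(-1)^{n-d'}\bigl[\sum_i\theta(\lambda^{q^i})\bigr]\,|\gl[\bff_d]{d'}|/\bigl((q^n-1)q^{n(d'-1)/2}\bigr)$. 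Multiplying the latter by $\mathrm{St}(s)^{k-1}$ yields exactly the value of $\Theta_{k,N,\psi}(s)$ in Theorem~\ref{int_char_thm}. That is the entire argument.

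The ``formidable identity involving $q$-hypergeometric series and linear algebra'' mentioned in the abstract lives inside the proofs of Theorems~\ref{int_dim} and~\ref{int_char_thm}, not in the passage from those to Theorem~\ref{st_thm}. Once $\Theta_{k,N,\psi}$ is known in closed form, no Gaussian binomials, no inclusion--exclusion over divisors, and no further $q$-series identity are required; the check is two lines of arithmetic with the Steinberg value. If you insist on going through Theorem~\ref{mainthrm}, the resulting ``identity'' collapses by a single M\"obius inversion (this is exactly equation~\eqref{deq2} in the paper) and again needs nothing hypergeometric. So your plan would work, but the anticipated obstacle is illusory at this stage of the paper.
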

Note that for $n=1$, Theorem \ref{st_thm} gives $\pi_{k,N ,\psi } \cong \theta\upharpoonright_{\bff^*}$, which also follows from Gel'fand-Graev \cite{gelfand1962} in case of $\gl{k}$ (cf. \cite[Ch.~8.1]{carter1993}).

We are currently investigating the analogue construction for $\bff$ a non-Archimedean local field.
\subsection{Structure of paper}
 In \S \ref{dim_sec} we calculate the dimension of $\pi_{k,N,\psi}$. Green's formula allows us to express the dimension as rather complicated sum. We used tools from $q$-hypergeometric series and linear algebra to show that this sum admits the following compact form.
 \begin{intTheorem} \label{int_dim}
Let $k\ge 2$. We have
\begin{equation*}
 	\mathrm{dim}\left(\pi_{k,N,\psi}\right)=q^{(k-2) \frac{n(n-1)}{2}}\frac{\left|\gl{n}\right|}{q^n-1}.
 	\end{equation*}
 \end{intTheorem}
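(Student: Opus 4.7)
The plan is to compute the dimension as the trace of the canonical projection $P_{k,N,\psi}$, giving
\begin{equation*}
\dim \piknpsi{k} = \frac{1}{|N|} \sum_{u \in N} \overline{\psi}(u)\, \chi_{\pi_\theta}(u).
\end{equation*}
Every $u \in N$ is unipotent, so $\chi_{\pi_\theta}(u)$ depends only on the Jordan type $\lambda$ (a partition of $kn$) of $u$. Since $\theta$ is a regular character of $\bff_{kn}^*$, Green's character formula for the cuspidal representation $\pi_\theta$ produces an explicit polynomial in $q$ for each value $\chi_{\pi_\theta}(u_\lambda)$, with a sign depending only on $\ell(\lambda)$.

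To exploit this, I would stratify $N$ by the diagonal blocks $D_i := X_{i,i}$, $1 \le i \le k-1$, on which alone $\psi$ depends. The sum then splits as an outer sum over $(D_1,\dots,D_{k-1}) \in M_n(\bff)^{k-1}$ weighted by $\overline{\psi_{0}}(\mathrm{tr}(\sum_i D_i))$, and an inner sum, for each fixed tuple of diagonals, over the strictly above-diagonal blocks $X_{i,j}$ ($i<j$) of $\chi_{\pi_\theta}(u)$. Since the character is constant on Jordan types, the inner sum reduces to a weighted count of off-diagonal configurations producing each possible type. The type is determined by the kernel dimensions $\dim \ker(u-I)^s$, which for $u \in N$ are rank conditions in products of the blocks; elementary linear algebra over $\bff$ enumerates them in terms of Gaussian $q$-binomial coefficients.

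After these reductions, the dimension is expressed as an elaborate $q$-hypergeometric sum indexed by rank data and the diagonal blocks $D_i$. The main obstacle, and the formidable identity promised in the abstract, is the closed-form evaluation of this sum. A natural strategy is induction on $k$: the base case $k=2$ is Prasad's Theorem~\ref{thmprasad}, which yields dimension $|\gl{n}|/(q^n-1)$; for the inductive step I would peel off the outermost column of off-diagonal blocks, aiming to produce exactly one new factor of $q^{n(n-1)/2}$ via a classical basic hypergeometric summation such as $q$-Chu-Vandermonde or terminating $q$-Saalsch\"utz. Iterating this produces the exponent $(k-2)\tfrac{n(n-1)}{2}$ in the statement. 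An alternative is a direct evaluation in which one first applies Fourier orthogonality over $M_n(\bff)$ to collapse many terms from the outer sum, leaving a shorter $q$-series expression to which a known summation applies. Either way, once the identity is verified the theorem follows immediately.
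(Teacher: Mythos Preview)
Your overall strategy matches the paper's: trace of the projector, Green's formula for $\chi_{\pi_\theta}$ on unipotents, stratification by the diagonal blocks $X_{i,i}$, and a $q$-hypergeometric summation (indeed $q$-Chu--Vandermonde) handled by induction on $k$. The base case and the expected extra factor of $q^{\binom{n}{2}}$ per step are also correctly anticipated.

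One point, however, is over-engineered in your sketch and could cause real trouble if carried through literally. You write that the inner sum requires tracking the full Jordan type via all kernel dimensions $\dim\ker(u-I)^s$. In fact Green's formula for a \emph{cuspidal} character (the paper's Theorem~\ref{greenthm} and Corollary~\ref{greencor}) shows that $\Theta_\theta(u)$ depends only on $t=\dim\ker(u-I)$, i.e.\ only on $\mathrm{rk}(u-I)$, not on the full partition. Concretely, for $u\in N$ one has $\Theta_\theta(u)=(-1)^{kn-1}(q;q)_{kn-1-\mathrm{rk}(u-I)}$. This single-rank dependence is what makes the computation tractable; attempting to enumerate by full Jordan type would be substantially harder and is unnecessary.

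With that simplification in hand, the paper isolates the resulting inner sum (over the off-diagonal blocks, with the ranks $r_i=\mathrm{rk}(X_{i,i})$ fixed) as a standalone $q$-identity (their Proposition~\ref{fkprop}), proved exactly by the kind of induction you describe. The payoff of evaluating that proposition in closed form is that the whole expression then \emph{factorises} into a product of $k-1$ independent sums $\sum_{X\in M_n(\bff)}\overline{\psi_0}(\mathrm{tr}\,X)(q;q)_{2n-1-\mathrm{rk}X}$, each of which is precisely Prasad's sum and contributes one factor $(-1)^n q^{n^2}q^{\binom{n}{2}}(q;q)_{n-1}$. Your ``peel off the outermost column'' induction is essentially the proof of that proposition, but phrasing it as a separate identity is what exposes the clean factorisation; if you try to induct directly on $\dim\piknpsi{k}$ without first recognising that the character depends only on a single rank, the bookkeeping is likely to obscure this structure.
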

We denote the character of $\pi_{k,N,\psi}$ by $\Theta _{k,N ,\psi}$. In \S \ref{char_sec} we compute $\Theta _{k,N ,\psi}$, which apart from the tools used in Theorem \ref{int_dim} requires understanding of some conjugacy classes of $\gl{n}$.
\begin{intTheorem} \label{int_char_thm}\
	Let $k\ge 2$. Let $g = s \cdot u$ be the Jordan decomposition of an element $g$ in $\gl{n}$.
	\begin{itemize}
		\item[(I)]  If the semisimple part $s$ does not come from $\bff_n$, then
		\begin{equation*}
		\Theta _{k,N ,\psi }(g)  =0.
		\end{equation*}
		\item[(II)]  If $u\neq I_n$, then
		\begin{equation*}
		\Theta _{k,N ,\psi }(g)  =0.
		\end{equation*}
		\item[(III)] Assume that $u=I_n$ and that the semisimple element $s$ comes from $\bff_d\subseteq \bff_n$ and $d\mid n$ is minimal. Let $\lambda$ be an eigenvalue of $s$ which generates $\bff_d$ over $\bff$. Then,
		\begin{align*}
			\Theta _{k,N ,\psi }(s)  = (-1)^{k(n-{d^\prime})}  q^{(k-2)\frac{n({d^\prime}-1)}{2}} \cdot \left[ \sum \limits _{i=0}^{d-1} \theta(\lambda^{q^i})\right] \cdot  \frac{\left|\mathrm{GL}_{{d^\prime}}(\bff_{d})\right|}{q^n-1} ,
		\end{align*}
		where $d^\prime=n/d$.
	\end{itemize}
\end{intTheorem}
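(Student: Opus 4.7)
The starting point is the projection formula: for $g\in\gl{n}$, embedded diagonally as $\tilde g:=\mathrm{diag}(g,\dots,g)\in\Delta^k(\gl{n})\subset M$, one has
\begin{equation*}
\Theta_{k,N,\psi}(g)=\frac{1}{|N|}\sum_{u\in N}\overline{\psi}(u)\,\chi_{\pi_\theta}(\tilde g\,u).
\end{equation*}
The plan is to invoke Green's character formula for the cuspidal representation $\pi_\theta$ of $\gl{kn}$ on each summand, and to analyze the resulting sum case by case.

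For (I), observe that the characteristic polynomial of $\tilde g\,u$ is $\chi_g(T)^k$ for every $u\in N$, since $\tilde g$ is block-diagonal and $u$ is block-upper-unipotent. Green's formula forces $\chi_{\pi_\theta}$ to vanish on any element whose semisimple part has more than one distinct irreducible factor in its characteristic polynomial. If $s$ does not come from a single subfield $\bff_d\subseteq\bff_n$, then $\chi_g(T)^k$ has several distinct irreducible factors, and every summand vanishes.

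For (II), thanks to (I) we may assume $\chi_g(T)=f(T)^{d'}$ with $f$ irreducible of degree $d$ and $dd'=n$; conjugating $g$, we reduce to $s$ lying in the standard embedding $\gl[\bff_d]{d'}\hookrightarrow\gl{n}$ and $u'\neq I_n$ a non-trivial unipotent in $\gl[\bff_d]{d'}$. Green's formula gives
\begin{equation*}
\chi_{\pi_\theta}(\tilde g\,u)=(-1)^{k(n-d')}\Bigl(\sum_{i=0}^{d-1}\theta(\lambda^{q^i})\Bigr)\cdot Q^{\gl[\bff_d]{kd'}}_{v(u)}(1),
\end{equation*}
where $v(u)$ denotes the unipotent part of $\tilde g\,u$ inside the centralizer $\gl[\bff_d]{kd'}$ of $\tilde s$. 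I would then isolate one of the diagonal blocks $X_{i,i}$ in the parametrisation of $u$ and perform the inner summation over $X_{i,i}\in M_n(\bff)$ weighted by $\overline{\psi_0(\mathrm{tr}(X_{i,i}))}$; the non-degeneracy of $\psi$ combined with the non-triviality of $u'$ produces a Fourier-type orthogonality that kills the inner sum. The main obstacle here is making the $X_{i,i}$-dependence of the Green-function value $Q_{v(u)}(1)$ explicit enough to expose this cancellation cleanly.

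For (III) we take $u'=I_n$, so $\tilde g=\tilde s$ is semisimple and Green's formula simplifies. The sign $(-1)^{k(n-d')}$ already matches the theorem. What remains is to evaluate
\begin{equation*}
\frac{1}{|N|}\sum_{u\in N}\overline{\psi}(u)\,Q^{\gl[\bff_d]{kd'}}_{v(u)}(1).
\end{equation*}
The key step is to reinterpret this sum as the dimension of the analogous twisted Jacquet module for the group $\gl[\bff_d]{kd'}$ with partition $((d')^k)$; applying Theorem \ref{int_dim} with the base field $\bff$ replaced by $\bff_d$ and $n$ replaced by $d'$, and using the identity $dd'(d'-1)/2=n(d'-1)/2$, produces $q^{(k-2)n(d'-1)/2}\cdot|\gl[\bff_d]{d'}|/(q^n-1)$. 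Multiplying by the sign and the factor $\sum_i\theta(\lambda^{q^i})$ yields the stated expression. The main obstacle here is rigorously justifying this descent, i.e.\ matching the $\overline\psi$-weighted sum over $N$ with a genuinely non-degenerate character on the unipotent radical of the corresponding parabolic inside $\gl[\bff_d]{kd'}$.
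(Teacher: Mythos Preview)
Your overall strategy matches the paper's: start from the projection formula, apply Green's character formula termwise, and treat the three cases separately. Part (I) is exactly the paper's argument. Part (III) is also the paper's approach---reduce the weighted sum over $N$ to the dimension computation of Theorem~\ref{int_dim} with $(\bff,n)$ replaced by $(\bff_d,d')$. The paper executes the descent by conjugating $h_{g;U}$ by $\Delta^k(R)\Delta^k(\Delta^{d'}(T))$ (the matrices from \S\ref{anal_jord_sec}) and then Gaussian-eliminating against the nonzero diagonal entries of $s_\rho-\lambda I_n$; this kills all but one $\bff_d$-coordinate per $d\times d$ block in each $(X_{a,b})_\rho$, and what survives is literally the sum in \eqref{splitsumdim} over $\bff_d$ with the character $x\mapsto\psi_0(\mathrm{Tr}_{\bff_d/\bff}(\lambda^{-1}x))$. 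So your obstacle for (III) is resolved by exactly this conjugation-plus-elimination machinery.

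For (II), however, your plan of summing over a full block $X_{i,i}$ will not go through as stated: the Green value $Q_{v(u)}(1)$ genuinely depends on $\mathrm{rk}(X_{i,i})$, so there is no clean Fourier orthogonality over all of $M_n(\bff)$. The paper's key idea is finer. After the same conjugation by $\Delta^k(R)\Delta^k(\Delta^{d'}(T))$, the non-semisimplicity of $g$ means the upper-left $2d\times 2d$ block of $g_\rho-\lambda I_n$ contains an off-diagonal identity $I_d$ (coming from the Jordan block of size $\ge 2$). Using one entry of this $I_d$ as a pivot, one can eliminate an entire column of $(X_{k-1,k-1})_\rho$ without changing the rank of $h_{g_\rho-\lambda I_n;U_\rho}$. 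In particular there is a \emph{single} $\bff_d$-coordinate $z_{1,0;1}$ of $X_{k-1,k-1}$ on which $\Theta_\theta(h_{g;U})$ does not depend. Lemma~\ref{lem_tr_conj} then shows that $\mathrm{tr}(g^{-1}X_{k-1,k-1})$ depends on $z_{1,0;1}$ only through the term $\mathrm{Tr}_{\bff_d/\bff}(\lambda^{-1}z_{1,0;1})$, and is otherwise independent of it. Summing over this one coordinate alone gives the vanishing. So the missing ingredient in your sketch for (II) is: isolate a single coordinate, not a whole block, and use the extra pivot supplied by $u\neq I_n$ to decouple it from the rank.
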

For any $\ell$ dividing $n$ and any $k\ge 2$, let
\begin{equation}\label{anlq}
a_{k;n,\ell}(q)=\frac{q^\ell-1}{q^n-1} \sum_{m: \, \ell \mid m \mid n} \mu\left(\frac{m}{\ell}\right) (-1)^{k(n- \frac{n}{m})} q^{(k-2)\frac{n}{2} (\frac{n}{m}-1)},
\end{equation}
where $\mu$ is the M\"obius function. When $k=2$, it is easily shown (see Lemma \ref{lemand}) that
\begin{equation}
a_{2;n,\ell}(q) = \delta_{\ell,n}.
\end{equation}
If $k>2$ we show in Lemma \ref{lemand} that
$a_{k;n,\ell}(q)\in\mathbb{N}_{>0}$, except when $k$ is odd, $n$ is even and $2 \nmid \frac{n}{\ell}$, in which case $-a_{k;n,\ell}(q)\in\mathbb{N}_{>0}$. In
\S \ref{mainth_sec} we conclude from Theorem \ref{int_char_thm} and Lemma \ref{lemand} the following decomposition of representations.

\begin{intTheorem} \label{mainthrm}
Let $k \ge 2$.
	\begin{itemize}
		\item[(I)] If $k$ is even or $n$ is odd, we have
		\begin{equation}
			\begin{split}
				\pi _{k,N,\psi} \cong &\bigoplus_{\ell \mid n} a_{k;n,\ell}(q) \cdot \mathrm{Ind}_{\bff^*_{\ell}} ^{\gl{n}} \theta \upharpoonright _{\bff^*_{\ell}}. \label{mainth1}
			\end{split}
		\end{equation}
		\item[(II)] If $k$ is odd and $n$ is even, we have
		\begin{equation}
			\pi _{k,N,\psi}\oplus  \bigoplus_{\ell: \, \ell \mid n, 2 \nmid \frac{n}{\ell}}(-a_{k;n,\ell}(q))\cdot \mathrm{Ind}_{\bff^*_{\ell}} ^{\gl{n}} \theta \upharpoonright _{\bff^*_{\ell}} \cong  \bigoplus_{\ell: \, \ell \mid n, 2 \mid \frac{n}{\ell}} a_{k;n,\ell}(q) \cdot \mathrm{Ind}_{\bff^*_{\ell}} ^{\gl{n}} \theta \upharpoonright _{\bff^*_{\ell}}. \label{mainth2}
		\end{equation}
		
	\end{itemize}
	
\end{intTheorem}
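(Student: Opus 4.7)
The plan is to compare characters on both sides of \eqref{mainth1}/\eqref{mainth2}. Write $\chi_\ell$ for the character of $\mathrm{Ind}_{\bff^*_\ell}^{\gl{n}} \theta\upharpoonright_{\bff^*_\ell}$. By Theorem \ref{int_char_thm}, $\Theta_{k,N,\psi}$ vanishes on any element whose unipotent part is nontrivial or whose semisimple part does not come from $\bff_n$; and each $\chi_\ell$ is supported on semisimple elements conjugate into $\bff^*_\ell \subseteq \bff^*_n$. Hence the character identity to be verified reduces to the case $g = s$ semisimple coming from some $\bff_d \subseteq \bff_n$ with $d$ minimal.

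The first step is to evaluate $\chi_\ell(s)$ by Frobenius' formula. Under the embedding $\bff^*_\ell \hookrightarrow \gl{n}$ via multiplication on $\bff_n$ (viewed as an $\bff_\ell$-vector space), an element $\mu \in \bff^*_\ell$ of degree $e \mid \ell$ over $\bff$ has $\bff$-characteristic polynomial $\prod_{i=0}^{e-1}(X - \mu^{q^i})^{n/e}$. Consequently $s$ is $\gl{n}$-conjugate to some $\mu \in \bff^*_\ell$ iff $d \mid \ell$, and then only to the Frobenius conjugates $\lambda, \lambda^q, \ldots, \lambda^{q^{d-1}}$. A centralizer count $|C_{\gl{n}}(s)| = |\mathrm{GL}_{d'}(\bff_d)|$ then yields
\begin{equation*}
\chi_\ell(s) = \frac{|\mathrm{GL}_{d'}(\bff_d)|}{q^\ell - 1} \sum_{i=0}^{d-1} \theta(\lambda^{q^i}) \quad \text{when } d \mid \ell,
\end{equation*}
and $\chi_\ell(s) = 0$ otherwise.

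Next I substitute this into $\sum_{\ell \mid n} a_{k;n,\ell}(q)\,\chi_\ell(s)$, expand $a_{k;n,\ell}(q)$ via \eqref{anlq}, and interchange the sums over $\ell$ and $m$ under the joint constraint $d \mid \ell \mid m \mid n$. Writing $\ell = dj$ with $j \mid m/d$, the inner sum collapses by Möbius inversion:
\begin{equation*}
\sum_{\ell:\, d \mid \ell \mid m} \mu(m/\ell) = \sum_{j \mid m/d} \mu\bigl((m/d)/j\bigr) = \delta_{d,m}.
\end{equation*}
Only $m = d$ survives, and the surviving summand reproduces exactly the expression for $\Theta_{k,N,\psi}(s)$ given in Theorem \ref{int_char_thm}(III). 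This establishes the virtual identity $\piknpsi{k} = \sum_{\ell \mid n} a_{k;n,\ell}(q) \cdot \mathrm{Ind}_{\bff^*_\ell}^{\gl{n}} \theta\upharpoonright_{\bff^*_\ell}$ in the Grothendieck group of $\gl{n}$.

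Finally, I invoke Lemma \ref{lemand} to upgrade this to an isomorphism of genuine representations. In case (I), when $k$ is even or $n$ is odd, every $a_{k;n,\ell}(q)$ is a nonnegative integer, so the virtual identity is already \eqref{mainth1}. In case (II), when $k$ is odd and $n$ is even, Lemma \ref{lemand} isolates the negative coefficients as exactly those with $2 \nmid n/\ell$; transferring precisely those terms to the left-hand side presents both sides as genuine direct sums, yielding \eqref{mainth2}. The main obstacle is the bookkeeping in the order-of-summation/Möbius step --- in particular, verifying that the sign $(-1)^{k(n-d')}$, the power $q^{(k-2)n(d'-1)/2}$, the factor $|\mathrm{GL}_{d'}(\bff_d)|$, and the $\theta$-sum in Theorem \ref{int_char_thm}(III) all align correctly with the Frobenius-reciprocity output --- but once that is handled the remainder is mechanical.
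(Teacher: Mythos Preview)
Your proposal is correct and follows essentially the same route as the paper: reduce to a character identity, note both sides vanish off semisimple elements coming from $\bff_n$, compute $\chi_\ell(s)$ (this is the paper's Lemma~\ref{lemindch}), expand $a_{k;n,\ell}(q)$ and collapse the double sum via M\"obius to pick out $m=d$, then invoke Lemma~\ref{lemand} for the sign bookkeeping that separates cases (I) and (II). The only cosmetic difference is that you rederive the induced character formula on the fly rather than citing Lemma~\ref{lemindch}.
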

 In \S \ref{st_sec} we deduce Theorem \ref{st_thm} from Theorem \ref{int_char_thm}.

\section{Preliminaries}
\subsection{Cuspidal representations}
We review the irreducible cuspidal representations of $\gl{m}$ as in S.I. Gel'fand \cite[\S6]{gelfand1975} (originally in J. A. Green \cite{green1955}). Irreducible cuspidal representations of $\gl{m}$, from which all the other irreducible representations of $\gl{m}$ are obtained via the process of parabolic induction, are associated to regular characters of $\bff_{m}^*$. A multiplicative character $\theta$ of $\bff_{m}^*$ is called \textit{regular} if, under the action of the Galois group of $\bff_{m}$ over $\bff$, the orbit of $\theta$ consists of $m$ distinct characters of $\bff_{m}^*$.

We denote the irreducible cuspidal representation of $\gl{m}$ associated to a regular character $\theta$ of $\bff_{m}^*$ by $\pi_\theta$  and the character of the representation $\pi _\theta$ by $\Theta_\theta$.

Given $a\in\bff_{m}$, consider the map $m_a:\bff_{m}\to \bff_{m}$, defined by $m_a(x)=ax$. The map $a\mapsto m_a$ is an injective homomorphism of algebras $\bff_{m} \hookrightarrow \mathrm{End}_{\bff}(\bff_{m})$. This way, every element of $\bff_{m}^*$ gives rise to a well-defined conjugacy class in $\gl{m}$. The elements in the conjugacy classes in $\gl{m}$, which are so obtained from elements of $\bff_{m}^*$, are said to come from $\bff_{m}^*$.

We summarize the information about the character $\Theta_\theta$ in the following theorem. We refer to the paper of S. I. Gel'fand \cite[\S6]{gelfand1975} for the statement of this theorem in this explicit form, which is originally due to Green \cite[Thm.~14]{green1955} (See also the paper of Springer and Zelevinsky \cite{springer1984})  The theorem is quoted as it appears in \cite[Thm.~2]{prasad2000}.
\begin{Theorem}[Green \cite{green1955}] \label{greenthm}
	Let $\Theta_\theta$ be the character of a cuspidal representation $\pi_\theta$ of $\gl{m}$ associated to a regular character $\theta$ of $\bff_{m}^*$. Let $g = s \cdot u$ be the Jordan decomposition of an element $g$ in $\gl{m}$ ($s$ is a semisimple element, $u$ is unipotent and $s,u$ commute). If $\Theta_\theta(g) \not= 0$, then the semisimple element $s$ must come from $\bff_{m}^*$. Suppose that $s$ comes from $\bff_{m}^*$. Let $\lambda$ be an eigenvalue of $s$ in $\bff_{m}^*$, and let $t=\mathrm{dim}_{\bff_{m}}\ker (g-\lambda I)$. Then
	\begin{equation}
	\Theta_\theta(s\cdot u) = (-1)^{m-1}\left[\sum\limits_{\alpha=0}^{d-1}\theta(\lambda^{q^\alpha})\right](1-q^d)(1-({q^d})^2)\cdots(1-({q^d})^{t-1}) \label{green}
	\end{equation}
	where $q^d$ is the cardinality of the field generated by $\lambda$ over $\bff$, and the summation is over the various distinct Galois conjugates of $\lambda$.
\end{Theorem}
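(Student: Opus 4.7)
The plan is to prove Green's character formula by realising $\pi_\theta$ as a Deligne--Lusztig representation and then applying the Deligne--Lusztig character formula. Let $G=\gl{m}$ and let $T\subset G$ be an anisotropic Coxeter maximal torus, so that $T(\bff)\cong\bff_m^{*}$. A regular character $\theta$ of $\bff_m^{*}$ is, by definition, a character of $T(\bff)$ in general position under the Weyl group $W(T)\cong\mathbb{Z}/m\mathbb{Z}$. Standard theory then guarantees that $\pm R_T^\theta$ is an irreducible cuspidal representation; matching dimensions (or a small-$m$ base case) identifies it with $\pi_\theta$, with the sign being $(-1)^{m-1}$.

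The first step is to invoke the Deligne--Lusztig character formula,
\begin{equation*}
R_T^\theta(g)=\frac{1}{|C_G(s)(\bff)|}\sum_{\substack{x\in G(\bff)\\ x^{-1}sx\in T(\bff)}}\theta(x^{-1}sx)\,Q_{xTx^{-1}}^{C_G(s)}(u),
\end{equation*}
where $g=su$ is the Jordan decomposition and $Q$ denotes the Green function. The outer sum is empty unless $s$ is $G(\bff)$-conjugate to an element of $T(\bff)$; since the $G(\bff)$-conjugacy classes meeting $T(\bff)$ are precisely those of $\bff$-linear multiplication by elements of $\bff_m^{*}$, this yields the vanishing statement of part (i).

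Next, assume $s$ acts on $\bff^m$ by multiplication by $\lambda\in\bff_m^{*}$, where $\lambda$ generates $\bff_d\subseteq\bff_m$ over $\bff$. The centraliser $C_G(s)$ is isomorphic to $\gl[\bff_d]{m/d}$, acting on $\bff^m$ equipped with its $\bff_d$-module structure induced by $s$; inside this centraliser the torus $T$ remains an anisotropic Coxeter torus. The $G(\bff)$-conjugates of $s$ lying in $T(\bff)$ are exactly the $d$ Galois conjugates $\{\lambda^{q^\alpha}:0\le\alpha<d\}$, and each contributes the same Green function value weighted by $\theta(\lambda^{q^\alpha})$. Cancelling the normalising factor $|C_G(s)(\bff)|^{-1}$ against the count of conjugating elements reduces the formula to
\begin{equation*}
R_T^\theta(g)=\left[\sum_{\alpha=0}^{d-1}\theta(\lambda^{q^\alpha})\right]\,Q_{T^\prime}^{C_G(s)}(u),
\end{equation*}
where $T^\prime\subset\gl[\bff_d]{m/d}$ is the inherited Coxeter torus.

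The main obstacle is the last step: evaluating $Q_{T^\prime}^{C_G(s)}(u)$ in closed form. Here I would invoke the explicit formula, due to Green and later reproved by Steinberg and Springer, for Green functions of Coxeter tori in $\gl[\bff_d]{m/d}$; at a unipotent element whose fixed space has dimension $t$, this value equals $(-1)^{m/d-1}(1-q^d)(1-q^{2d})\cdots(1-q^{d(t-1)})$. Combining this sign with $(-1)^{m-1}$ from the identification $\pi_\theta=(-1)^{m-1}R_T^\theta$ and checking that the dimension parameter $t$ matches $\dim_{\bff_m}\ker(g-\lambda I)$ are the genuine combinatorial core of the argument. As an alternative, Green's own 1955 proof avoids Deligne--Lusztig entirely, inducting on $m$ through parabolic induction and Hall--Littlewood symmetric-function identities, but the closed-form evaluation of the Coxeter Green function remains the crux in either route.
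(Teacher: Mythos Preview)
The paper does not prove this theorem at all: it is quoted as a background result, with attribution to Green's original paper \cite{green1955}, Gel'fand's reformulation \cite{gelfand1975}, and Prasad \cite{prasad2000} for the precise phrasing. There is therefore no ``paper's own proof'' to compare your proposal against.

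That said, your sketch via Deligne--Lusztig theory is one of the standard modern routes to this formula, and the outline is sound. A couple of points to tighten if you were to write it up in full. First, the sign bookkeeping: the identification $\pi_\theta=(-1)^{m-1}R_T^\theta$ and the sign $(-1)^{m/d-1}$ coming out of the Coxeter Green function for $\gl[\bff_d]{m/d}$ must combine to give exactly $(-1)^{m-1}$, and this deserves an explicit check rather than a parenthetical remark. Second, the matching of the parameter $t$: in the Green-function step $t$ arises as $\dim_{\bff_d}\ker(u-I)$ inside $C_G(s)\cong\gl[\bff_d]{m/d}$, whereas the statement defines $t=\dim_{\bff_m}\ker(g-\lambda I)$; these agree after a base-change argument, but you should say so. Finally, the ``explicit formula due to Green and later reproved by Steinberg and Springer'' for the Coxeter Green function is itself the nontrivial input here---invoking it is legitimate, but it is essentially equivalent in depth to the original theorem, so your proof is really a reduction to that known computation rather than an independent argument.
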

\begin{cor}\label{greencor}
	The value $\Theta_\theta(g)$ is determined by the eigenvalue of $g$ and the number of Jordan blocks of $g$, which, in turn, is determined by  $\mathrm{dim}_{\bff_{m}}\ker (g-\lambda I)$.
\end{cor}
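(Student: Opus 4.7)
The plan is to read the corollary directly off the explicit formula~(\ref{green}) in Theorem~\ref{greenthm}. Outside the case where $s$ comes from $\bff_m^*$ the character vanishes, and this vanishing is already detected by the eigenvalues of $g$ (the requirement being $\lambda\in\bff_m^*$); so I may restrict attention to the nontrivial case of Theorem~\ref{greenthm}. In that case, the right-hand side of~(\ref{green}) depends on $g$ only through $\lambda$, the degree $d=[\bff(\lambda):\bff]$, and the integer $t=\dim_{\bff_m}\ker(g-\lambda I)$, so it suffices to show that these three quantities are encoded by the eigenvalue of $g$ and the number of Jordan blocks, and conversely.

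First I would observe that $d$ and the sum $\sum_{\alpha=0}^{d-1}\theta(\lambda^{q^\alpha})$ are Galois-invariant, so the scalar prefactor $(-1)^{m-1}\sum_\alpha\theta(\lambda^{q^\alpha})$ depends only on the eigenvalue of $g$ up to Galois conjugacy. Next I would identify $t$ with a Jordan-block count. Because $s$ and $u$ commute, $u$ preserves each generalized eigenspace $V_\lambda$ of $s$ over $\bar\bff$, and on $V_\lambda$ one has $g=s\cdot u=\lambda\cdot u|_{V_\lambda}$, so
\begin{equation*}
g-\lambda I=\lambda\bigl(u|_{V_\lambda}-I\bigr)\quad\text{on }V_\lambda.
\end{equation*}
Hence $\ker(g-\lambda I)$ is the fixed space of the unipotent operator $u|_{V_\lambda}$, whose dimension equals the number of Jordan blocks of $u|_{V_\lambda}$, i.e.\ the number of Jordan blocks of $g$ sitting over the eigenvalue $\lambda$. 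By Galois symmetry, each conjugate eigenvalue $\lambda^{q^\alpha}$ contributes the same block count, so the total Jordan block count of $g$ equals $d\cdot t$; once the eigenvalue is fixed this datum is equivalent to $t$.

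Substituting $t$ back into the remaining factor $(1-q^d)(1-q^{2d})\cdots(1-q^{(t-1)d})$ of~(\ref{green}) then completes the dependency analysis and yields both assertions of the corollary. I do not expect a real obstacle: the statement is essentially a bookkeeping remark on Green's formula, and the only nontrivial ingredient is the standard linear-algebra identification of $\dim\ker(g-\lambda I)$ with the number of Jordan blocks at $\lambda$.
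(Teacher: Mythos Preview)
Your proposal is correct and matches the paper's approach: the corollary is stated there without proof, as an immediate consequence of reading off the dependencies in Green's formula~(\ref{green}). Your added detail---identifying $t=\dim_{\bff_m}\ker(g-\lambda I)$ with the Jordan-block count at $\lambda$ via the decomposition $g-\lambda I=\lambda(u|_{V_\lambda}-I)$ and invoking Galois symmetry---is exactly the bookkeeping the paper leaves implicit.
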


\subsection{Characters induced from subfields} \label{char_ind_sec}

The following lemma summarizes the information about the character of $\induce[\gl{n}]{\bff_\ell ^*}{\theta\upharpoonright_{\bff_\ell^*}}$, where $\ell\mid n$ and $\theta$ is a character of $\bff_n^*$.
\begin{seclem} \label{lemindch}
Let $\theta$ be a character  of $\bff^*_n$. Suppose that $s\in \gl{n}$ comes from $\bff_d\subseteq \bff_\ell$ ($d\mid \ell$ is minimal). Then, the character $\Theta_{\mathrm{Ind}_\ell}$ of $\induce[\gl{n}]{\bff_\ell ^*}{\theta\upharpoonright_{\bff_\ell^*}}$ at $s$ is given by
\begin{align}
\Theta_{\mathrm{Ind}_\ell}(s)={}&\frac{1}{q^\ell-1}\sum _{\substack{g\in \gl{n}\\g^{-1}sg\in \bff^*_\ell}}\theta(g^{-1}sg) \label{indchar1}\\
={}&\frac{\left|\gl[\bff_d]{d^\prime}\right|}{q^\ell-1}\left[\sum\limits_{i=0}^{d-1}\theta(\lambda^{q^i})\right], \label{indchar2}
\end{align}
where $d^\prime = n/d$. The last sum is over the different Galois conjugates of $s$, thought of as an element of $\bff_d$. The value of the character $\Theta_{\mathrm{Ind}_\ell}$ at an element of $\gl{n}$ which does not come from $\bff_\ell$ is zero.
\end{seclem}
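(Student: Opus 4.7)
The starting point is the standard Frobenius formula for the character of a representation induced from a subgroup: for $H\le G$ and a character $\chi$ of $H$, the character of $\mathrm{Ind}_H^G\chi$ at $g\in G$ is
\begin{equation*}
\Theta_{\mathrm{Ind}}(g)=\frac{1}{|H|}\sum_{\substack{x\in G\\ x^{-1}gx\in H}}\chi(x^{-1}gx).
\end{equation*}
Applied to $H=\bff_\ell^*\hookrightarrow \gl{n}$ (which has order $q^\ell-1$) and $\chi=\theta\upharpoonright_{\bff_\ell^*}$, this gives \eqref{indchar1} at once, and it also gives the vanishing statement for elements not coming from $\bff_\ell$, since in that case the conjugation condition cannot be met and the sum is empty.

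To pass from \eqref{indchar1} to \eqref{indchar2}, I would partition the sum according to the value $\mu:=g^{-1}sg\in\bff_\ell^*$. Two elements of $\bff_\ell^*$, viewed in $\gl{n}$ via the regular embedding, are $\gl{n}$-conjugate to $s$ iff they have the same characteristic polynomial as $s$; since $s$ comes from $\bff_d$ with $\lambda$ generating $\bff_d$ over $\bff$, its characteristic polynomial is $\left(\prod_{i=0}^{d-1}(x-\lambda^{q^i})\right)^{d^\prime}$, so the admissible $\mu$ are exactly the $d$ Galois conjugates $\lambda,\lambda^q,\ldots,\lambda^{q^{d-1}}$ (all of which lie in $\bff_d\subseteq\bff_\ell$). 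For each such $\mu$, the fibre $\{g\in\gl{n}:g^{-1}sg=\mu\}$ is a coset of the centralizer $Z_{\gl{n}}(s)$, hence has size $|Z_{\gl{n}}(s)|$.

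The main computational step is to identify the centralizer. Viewing $\bff^n$ as an $\bff[T]$-module with $T$ acting by $s$, the semisimplicity of $s$ and the shape of its characteristic polynomial show $\bff^n\cong (\bff[T]/m_\lambda(T))^{d^\prime}\cong \bff_d^{d^\prime}$ as $\bff[T]$-modules, where $m_\lambda$ is the minimal polynomial of $\lambda$ over $\bff$. Hence $Z_{\gl{n}}(s)=\mathrm{Aut}_{\bff[T]}(\bff_d^{d^\prime})=\gl[\bff_d]{d^\prime}$, of order $|\gl[\bff_d]{d^\prime}|$. Putting everything together,
\begin{equation*}
\Theta_{\mathrm{Ind}_\ell}(s)=\frac{1}{q^\ell-1}\sum_{i=0}^{d-1}|Z_{\gl{n}}(s)|\,\theta(\lambda^{q^i})=\frac{|\gl[\bff_d]{d^\prime}|}{q^\ell-1}\sum_{i=0}^{d-1}\theta(\lambda^{q^i}),
\end{equation*}
which is \eqref{indchar2}.

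The only real obstacle is the centralizer computation; once the module-theoretic picture is set up, the rest is bookkeeping. A mild subtlety worth flagging is that the Galois conjugates of $\lambda$ in $\bff_\ell$ which produce distinct elements of $\bff_\ell^*$ conjugate to $s$ in $\gl{n}$ are exactly the $d$ conjugates inside $\bff_d$ (and not $\ell$ conjugates inside $\bff_\ell$), which is what makes the minimality of $d$ in the hypothesis relevant.
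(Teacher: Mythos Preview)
The paper states this lemma without proof in its preliminaries section, so there is no proof in the paper to compare against. Your argument is correct: \eqref{indchar1} is the Frobenius formula for induced characters, and your passage to \eqref{indchar2}---partitioning the sum over the possible values $\mu=g^{-1}sg\in\bff_\ell^*$, identifying these as the $d$ Galois conjugates of $\lambda$, and using that each fibre is a coset of $Z_{\gl{n}}(s)\cong\gl[\bff_d]{d^\prime}$---is the standard one and is carried out cleanly.
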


\begin{Rem} \label{rem_conj_in_fn}
	Recall that in \eqref{indchar1} $\bff_\ell^*$ is considered a subgroup of $\gl{n}$ by the injective map $a\mapsto [m_a]$, where $[m_a]$ is the representing matrix of $m_a$ with	 respect to a fixed basis of $\bff_{n}$ over $\bff$.
	Note that the choice of basis for $[m_a]$ does not affect the values of $\Theta_{\mathrm{Ind}_\ell}$.
\end{Rem}

\subsection{On some conjugacy classes of $\gl{n}$} \label{conj_class}
\subsubsection{Analogue of Jordan form} \label{anal_jord_sec}
Let $g\in \gl{n}$ and $g=s\cdot u$ its Jordan decomposition. Assume that $s$ comes from $\bff_d\subseteq \bff_n$ ($d\mid n$ is minimal). Let $\lambda\in \bff_d^*$ be an eigenvalue of $s$, which generates the field $\bff_d$ over $\bff$. Denote by $f$ the characteristic polynomial of $\lambda$ (of degree $d$), and by $L_f\in \gl{d}$ the companion matrix of $f$. For $\ell\geq 1$ we denote
\begin{equation}
	L_{f,\ell}=\begin{pmatrix}
	L_f&I_d&&\\&L_f&&\\&&\ddots&I_d\\&&&L_f
	\end{pmatrix}\in\gl{\ell\cdot d}.
\end{equation}
This is an analogue of a Jordan block. As in \cite{gelfand1975,green1955}, there exists $\rho=\left(\ell_1,\ldots,\ell_r\right)$, a partition of $\frac{n}{d}$, $\ell_1\geq\ell_2\geq\ldots\geq\ell_r$, $\frac{n}{d}=\sum_{i=1}^r\ell_i$, such that $g$ is conjugate to
\begin{equation}
L_{\rho}(f):=\begin{pmatrix}
L_{f,\ell_1}&&&\\&L_{f,\ell_2}&&\\&&\ddots&\\&&&L_{f,\ell_r}
\end{pmatrix},
\end{equation}
i.e. there exists $R\in\gl{n}$ such that
\begin{equation}\label{rgdef}
R^{-1}gR=L_\rho(f).
\end{equation}
Notice that in case $u=I_n$ ($g$ is semisimple), we have $\rho=(1^{n/d})$ and there exists $R\in\gl{n}$ such that $R^{-1}gR$ is a diagonal block matrix with $d^\prime=n/d$ times $L_f$ on the diagonal.
Otherwise, $\ell_1>1$ and, in particular, there exists $R\in\gl{n}$ such that the upper $2d\times 2d$ left corner of  $R^{-1}gR$ is
\begin{equation}
\begin{pmatrix}
L_f&I_d\\ &L_f
\end{pmatrix}.
\end{equation}
Now, $s$ (and so $g$) has $d$ different eigenvalues obtained by applying the Frobenius automorphism $\sigma$, which generates the Galois group $\mathrm{Gal}(\bff_d/\bff)$, namely
\begin{equation*}
\left\{ \lambda , \sigma (\lambda), \ldots , \sigma ^{d-1} (\lambda) \right\}=\left\{ \lambda , \lambda ^q, \ldots , \lambda ^{q^{d-1}} \right\},
\end{equation*}
all of multiplicity ${d^\prime}=n/d$ in the characteristic polynomial of $s$.
Let $0\neq v_0\in\bff_d^d$ satisfy $L_f\cdot v_0=\lambda v_0$. So $L_f\cdot \sigma^{i}(v_0)=\lambda^{q^i}\sigma^{i}(v_0)$, for $0\leq i\leq d-1$. Hence, $B=\left\{
v_0,\sigma(v_0),\ldots,\sigma^{d-1}(v_0)\right\}\subseteq\bff_d^d$ is linearly independent over $\bff_d$, since its elements are eigenvectors of $L_f$ for different eigenvalues. Let $T\in\gl[\bff_d]{d}$ be the diagonalizing matrix of $L_f$ obtained by $B$, i.e.
\begin{equation}\label{tgdef}
T^{-1}L_fT=D,
\end{equation}
where
\begin{equation}
D:=\mathrm{diag}\left(\lambda,\ldots,\lambda^{q^{d-1}}\right).
\end{equation}
Denote by $\Delta^{d^\prime}\left(T\right)$ the diagonal block matrix with $d^\prime$ times $T$ on the diagonal. Explicitly, the columns of $\Delta^{d^\prime}\left(T\right)$ are the vectors of the basis
\begin{equation} \label{eigenvectors}
C=\left\{v_0(i,j)\right\}_{0\leq i\leq d-1}^{0\leq j\leq d^\prime-1},
\end{equation}
whose $(j\cdot d+i)$-th vector is given by
\begin{equation}
	v_0(i,j)=\begin{pmatrix}
		\underline{0}_{j\cdot d}\\\sigma^{i}(v_0)\\\underline{0}_{n-(j+1)\cdot d}
	\end{pmatrix}\in \bff_d^n,
\end{equation}
where ${0\leq i\leq d-1}$ and ${0\leq j\leq d^\prime-1}$.
Thus, in case $u=I_n$
\begin{equation}
\Delta^{d^\prime}\left(T^{-1}\right)R^{-1}gR\Delta^{d^\prime}\left(T\right)=\begin{pmatrix}
D& & \\ &\ddots & \\ & &D
\end{pmatrix}.
\end{equation}
Otherwise
\begin{equation}
\Delta^{d^\prime}\left(T^{-1}\right)R^{-1}gR\Delta^{d^\prime}\left(T\right)=\begin{pmatrix}
D&I_d& & & \\ &D& & & \\ & &D&*&\\ & & &\ddots &\\	& & & &D
\end{pmatrix}.
\end{equation}
We denote
\begin{equation} \label{jord_analog}
g_{\rho}:=g_{\rho,R}=\Delta^{d^\prime}\left(T^{-1}\right)R^{-1}gR\Delta^{d^\prime}\left(T\right).
\end{equation}
The matrix $g_{\rho}$ is sometimes referred to as an analogue of the Jordan form of $g$ \cite[\S0]{gelfand1975}.
\subsubsection{Conjugating an arbitrary matrix} \label{cong_arb_mat}
Let $A\in M_n(\bff)$. We use the notation of \S \ref{anal_jord_sec}. In particular, we have a fixed $g \in \gl{n}$ and corresponding $R$ and $T$ as defined in \eqref{rgdef} and \eqref{tgdef}. We will study the following conjugation
\begin{equation}
A_{\rho}:=A_{\rho,R}=\Delta^{d^\prime}\left(T^{-1}\right)R^{-1}AR\Delta^{d^\prime}\left(T\right) \in M_n(\bff_d).
\end{equation}
Since $R\in\gl{n}$, $A\mapsto R^{-1}AR$ is an isomorphism. Hence, there exists a unique $A_R$ such that $A=RA_RR^{-1}$ so we write $A_\rho=\Delta^{d^\prime}\left(T^{-1}\right)A_R\Delta^{d^\prime}\left(T\right)$.

Let $B \in M_n(\bff_d)$. Let us represent the vectors $B\cdot v_0(0,m)$, for any $0\leq m \leq {d^\prime}-1$, as a linear combination of the basis $C$ given in \eqref{eigenvectors}:
\begin{equation} \label{lincombA}
B\cdot v_0(0,m) = \sum \limits _{\substack{0\leq i \leq d-1\\ 0\leq j \leq d^\prime-1}} a_{m,i;j} \cdot v_0(i,j),\qquad  a_{m,i;j}\in\bff_d.
\end{equation}
A necessary and sufficient condition for $B \in M_n(\bff)$ is that for all $0\leq m \leq {d^\prime}-1,\ 0\leq r \leq d-1$,
\begin{equation} \label{conditionsA}
B\cdot {v_0\left(r,m\right)} = \sum \limits _{\substack{0\leq i \leq d-1\\ 0\leq j \leq d^\prime-1}} \sigma ^r( a_{m,i;j})\cdot v_0\left(i+r\pmod d,j\right).
\end{equation}
By taking $B = A_R \in M_n(\bff)$, we get that \eqref{conditionsA} holds for $A_R$. Therefore, $[A_R]_C = A_{\rho}$ is a $d^\prime \times d^\prime$ matrix with entries from $M_d\left(\bff_d\right)$. For $0\leq m,j\leq d^\prime-1$, the $m$-th row and $j$-th column of $A_{\rho}$, denoted by $A_{m,j}$, is given by
\begin{equation} \label{asig_blocks}
A_{m,j}=\left(\sigma^r\left(a_{m,i-r\pmod d;j}\right)\right)_{0\leq i,r \leq d-1}\ \ ,
\end{equation}
i.e. $A_{m,j}\in M_d\left(\bff_d\right)$ and for ${0\leq i,r \leq d-1}$, the $i$-th row and $r$-th column of $A_{m,j}$ is $\sigma^r\left(a_{m,i-r\pmod d;j}\right)$. We proved,
\begin{seclem} \label{iso_f_to_fd}
	In the above notations, the map $A\mapsto A_\rho$ induces an isomorphism $M_n(\bff)\to  M_{n\times d^\prime}(\bff_d) \cong \left[M_{d\times d^\prime}(\bff_d)\right]^{d^\prime}$, viewed as $\bff$-vector space. It is given by
	\begin{equation}
	  A\mapsto
	  \begin{pmatrix}
		\left(a_{0,i;j}\right)_{\substack{0\leq i \leq d-1\\ 0\leq j \leq d^\prime-1}}\\
		\vdots\\
		\left(a_{d^\prime-1,i;j}\right)_{\substack{0\leq i \leq d-1\\ 0\leq j \leq d^\prime-1}}
	 \end{pmatrix},
	 \end{equation}
	  where the $(m\cdot d+i)$-th row and $j$-th column of the image of $A$ is $a_{m,i;j}\in\bff_d$, for $0\leq m,j\leq d^\prime-1$ and ${0\leq i \leq d-1}$.
\end{seclem}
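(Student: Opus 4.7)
The plan is to exhibit $A\mapsto (a_{m,i;j})_{m,i,j}$ as an $\bff$-linear bijection between $M_n(\bff)$ and $M_{n\times d^\prime}(\bff_d)$. The map factors as $A\mapsto A_R:=R^{-1}AR\mapsto (a_{m,i;j})$, where the second arrow records the coefficients appearing in the expansion \eqref{lincombA} of $A_R\cdot v_0(0,m)$ in the basis $C$ of \eqref{eigenvectors}, for $0\le m\le d^\prime-1$. Both stages are manifestly $\bff$-linear: conjugation by $R\in\gl{n}$ is an $\bff$-algebra automorphism of $M_n(\bff)$, and taking coordinates with respect to the fixed $\bff_d$-basis $C$ is $\bff_d$-linear, hence in particular $\bff$-linear.

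For injectivity, I will suppose that $a_{m,i;j}=0$ for all admissible $m,i,j$. Then $A_R\cdot v_0(0,m)=0$ for every $0\le m\le d^\prime-1$. Since $A_R$ has entries in $\bff$, the Frobenius compatibility \eqref{conditionsA} forces $A_R\cdot v_0(r,m)=0$ for every $r$ as well, because the right-hand side there is obtained by applying $\sigma^r$ to the (vanishing) coefficients $a_{m,i;j}$. Thus $A_R$ annihilates the entire $\bff_d$-basis $C$, giving $A_R=0$ and $A=RA_RR^{-1}=0$.

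To close the argument I compare dimensions: $\dim_\bff M_n(\bff)=n^2$ while $\dim_\bff M_{n\times d^\prime}(\bff_d)=n\cdot d^\prime\cdot d=n^2$, using $n=d\cdot d^\prime$. Injectivity together with equality of $\bff$-dimensions upgrades to a bijection, and the explicit formula for the map (with the $(m\cdot d+i)$-th row and $j$-th column equal to $a_{m,i;j}$) is immediate from the way the coefficients were indexed in \eqref{lincombA}.

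The main subtle point is that without \eqref{conditionsA}, the $n\cdot d^\prime$ scalars $a_{m,i;j}\in\bff_d$ would appear to record only part of the information in the $n\times n$ matrix $A_\rho$; it is precisely the Galois equivariance forced by $A_R\in M_n(\bff)$ that propagates the "first column" coefficients $a_{m,i;j}$ (corresponding to $r=0$) into the remaining entries via \eqref{asig_blocks}, and this is what makes the count work out to $n^2$. Alternatively one could prove surjectivity directly by starting from arbitrary $a_{m,i;j}\in\bff_d$, defining $A_\rho\in M_n(\bff_d)$ through \eqref{asig_blocks}, and then verifying that the conjugate $R\,\Delta^{d^\prime}(T)A_\rho\Delta^{d^\prime}(T^{-1})R^{-1}$ lies in $M_n(\bff)$; however the injectivity-plus-dimension route avoids that verification.
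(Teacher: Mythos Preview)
Your proof is correct and follows essentially the same route as the paper: both factor through $A\mapsto A_R$ and then use the Frobenius compatibility \eqref{conditionsA} to see that $A_R$ (hence $A_\rho$) is determined by the coefficients $a_{m,i;j}$ at $r=0$. The only minor difference is that the paper invokes both directions of \eqref{conditionsA} (necessity for the forward map, sufficiency for surjectivity), whereas you use only the necessity direction to get injectivity and then close with an $\bff$-dimension count---a variation you yourself flag in the final paragraph.
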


\subsubsection{Trace under conjugation}
For $g\in\gl{n}$ and $A\in M_n(\bff)$ we will be interested in $\mathrm{tr}\left(g^{-1}A\right)$.  We use the notation of \S \ref{anal_jord_sec} and \S \ref{cong_arb_mat}.
By \eqref{jord_analog}, we have
\begin{equation*}
\mathrm{tr}\left(g^{-1}A\right) =\mathrm{tr}\left(g_{\rho}^{-1}A_\rho\right) .
\end{equation*}
The inverse of an analogue of a Jordan block of order $d\cdot\ell$, is given by
\begin{equation} \label{inverse_jord}
	\left(\begin{pmatrix}
		D&I_d&\\&\ddots&I_d\\&&D
	\end{pmatrix}^{-1}\right)_{i,j}=\begin{cases}
		(-1)^{j-i}D^{-j+i-1},&i\leq j\\0,&i>j,
	\end{cases}
\end{equation}
for $0\leq i,j\leq \ell$, where the LHS of \eqref{inverse_jord} denotes the block in the $i$-th row and $j$-th column. Each block is in $M_d\left(\bff_d\right)$.
Therefore, in case $g$ is not semisimple, we have that $g_{\rho}^{-1}$ is an upper triangular block matrix, with $D^{-1}$ appearing $d^\prime$ times on the diagonal and at least one signed negative power of $D$ appearing in a block above the diagonal. Hence,
\begin{equation} \label{gen_trace}
\begin{split}
\mathrm{tr}\left(g_{\rho}^{-1}A_{\rho}\right)&= \sum_{m=0}^{d^\prime-1}\mathrm{tr}\left(D^{-1}A_{m,m}+D^{-2}\alpha_m\left(g,D^{-1},A_\rho\right)\right) \\
&=\mathrm{tr}\left( \sum_{m=0}^{d^\prime-1} D^{-1}A_{m,m} \right) + \sum_{m=0}^{d^\prime-1} \mathrm{tr}\left(  D^{-2}\alpha_m\left(g,D^{-1},A_\rho\right)\right),
\end{split}
\end{equation}
where $\alpha_m\left(g,D^{-1},A_\rho\right)$, for $0\leq m \leq d^\prime-1$. Notice, that in case $g$ is semisimple, then $\alpha_m\left(g,D^{-1},A_\rho\right)=0$ for all $0\leq m \leq d^\prime-1$. Otherwise, for $0\leq m \leq d^\prime-1$, $D^{-2}\alpha_m\left(g,D^{-1},A_\rho\right)$ equals to a sum of terms of the form $(-1)^{\ell}D^{-\ell-1}A_{\ell,m}$, where $m<\ell \leq d^\prime-1$.

By \eqref{asig_blocks} we have
\begin{equation}
	D^{-1}A_{m,m}=\left(\left(\lambda^{-1}\right)^{q^r}\sigma^r\left(a_{m,{i-r\pmod d};m}\right)\right)_{1\leq i,r \leq d-1}.
\end{equation}
So the first sum in the RHS of \eqref{gen_trace} becomes
\begin{equation*}
\sum_{m=0}^{d^\prime-1}\sum_{r=0}^{d-1}\left(\lambda^{-1}\right)^{q^r}\sigma^r\left(a_{m,0;m}\right)=\sum_{r=0}^{d-1}\sigma^r\left(\lambda^{-1}\sum_{m=0}^{d^\prime-1}a_{m,0;m}\right)=
\mathrm{Tr}_{\bff_{d}/\bff}\left(\lambda ^{-1} \cdot \sum \limits _{m=0} ^{d^\prime-1} a_{m,0;m}\right).
\end{equation*}
On the other hand, for each $0\leq m\leq d^\prime -1$, the term $\mathrm{tr}\left(D^{-2}\alpha_m\left(g,D^{-1},A_\rho\right)\right)$ in \eqref{gen_trace} does not depend on the elements $a_{\ell,0;m}$, where $\ell=m$ (only on $\lambda$ and on $a_{\ell,i,m}$ where $\ell>m$).
Thus, we proved
\begin{seclem} \label{lem_tr_conj}
	In the above notations,
\begin{equation}
\mathrm{tr}\left(g^{-1}A\right)=
\mathrm{Tr}_{\bff_{d}/\bff}\left(\lambda ^{-1} \cdot \sum \limits _{m=0} ^{d^\prime-1} a_{m,0;m}\right)+\sum_{m=0}^{d^\prime-1}\mathrm{tr}\left(D^{-2}\alpha_m\left(g,D^{-1},A_\rho\right)\right),
\end{equation}
and each summand $\mathrm{tr}\left(D^{-2}\alpha_m\left(g,D^{-1},A_\rho\right)\right)$ is independent of $a_{m,0;m}$ appearing in the first summand, for all $0\leq m \leq d^\prime-1$.

In case $g=s$ is semisimple we have
\begin{equation}
\mathrm{tr}\left(g^{-1}A\right)=
\mathrm{Tr}_{\bff_{d}/\bff}\left(\lambda ^{-1} \cdot \sum \limits _{m=0} ^{d^\prime-1} a_{m,0;m}\right).
\end{equation}
\end{seclem}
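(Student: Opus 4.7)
The plan is to organize the block-matrix computations sketched in the preceding paragraphs of \S\ref{cong_arb_mat} into a short argument that separates a ``Galois trace'' diagonal contribution from a strictly upper-triangular remainder whose entries do not involve the coordinates $a_{m,0;m}$.

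First I would use conjugation invariance of the trace together with the analogue of Jordan form from \S\ref{anal_jord_sec} to rewrite
\begin{equation*}
\mathrm{tr}(g^{-1}A)=\mathrm{tr}(g_{\rho}^{-1}A_{\rho}).
\end{equation*}
The matrix $g_{\rho}$ is block-diagonal, built from the analogues of Jordan blocks $L_{f,\ell_i}$ conjugated by $\Delta^{\ell_i}(T)$; by \eqref{tgdef} each diagonal block becomes $D$ and each superdiagonal block becomes $I_d$ (trivially when $\ell_i=1$, which is exactly the semisimple case). Applying the block-inverse formula \eqref{inverse_jord} to each such block shows that $g_{\rho}^{-1}$ is upper triangular as a block matrix, with $D^{-1}$ repeated $d'$ times on the diagonal and signed negative powers $(-1)^{\ell-m}D^{-(\ell-m)-1}$ occurring in certain strictly above-diagonal positions; in the semisimple case only the diagonal blocks $D^{-1}$ remain.

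Next, I would split $\mathrm{tr}(g_{\rho}^{-1}A_{\rho})$ into diagonal and off-diagonal block contributions. For each diagonal block, using the explicit form \eqref{asig_blocks} of $A_{m,m}$ one computes
\begin{equation*}
\mathrm{tr}\bigl(D^{-1}A_{m,m}\bigr)=\sum_{r=0}^{d-1}(\lambda^{-1})^{q^r}\sigma^{r}(a_{m,0;m})=\mathrm{Tr}_{\bff_d/\bff}\bigl(\lambda^{-1}a_{m,0;m}\bigr),
\end{equation*}
since the only entries of $D^{-1}A_{m,m}$ that contribute to the trace are the Galois-conjugate translates of $\lambda^{-1}a_{m,0;m}$. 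Summing over $m$ yields the Galois-trace term of the lemma, and by definition the remaining off-diagonal contribution is $\sum_{m}\mathrm{tr}\bigl(D^{-2}\alpha_m(g,D^{-1},A_{\rho})\bigr)$.

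Finally, the independence assertion is immediate from inspecting the support of $\alpha_m$: each summand entering $D^{-2}\alpha_m$ is of the form $(-1)^{\ell-m}D^{-(\ell-m)-1}A_{\ell,m}$ with $\ell>m$, so its trace depends only on entries of blocks $A_{\ell,m}$ with $\ell>m$, and by \eqref{asig_blocks} these entries are parametrized by the coefficients $a_{\ell,i;m}$ with $\ell>m$, none of which equals $a_{m,0;m}$. In the semisimple case the off-diagonal contribution vanishes and the second formula follows. The only real obstacle is careful bookkeeping through the nested block indices produced by \eqref{inverse_jord} and \eqref{asig_blocks}; once these two formulas are in hand, the argument is essentially a transcription.
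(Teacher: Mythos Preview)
Your proposal is correct and follows essentially the same approach as the paper: the paper's proof is precisely the discussion preceding the lemma (which ends with ``Thus, we proved''), and you have reconstructed the same steps---conjugation invariance, the block-inverse formula \eqref{inverse_jord}, the diagonal/off-diagonal split of \eqref{gen_trace}, the Galois-trace computation via \eqref{asig_blocks}, and the observation that the off-diagonal terms only involve blocks $A_{\ell,m}$ with $\ell>m$. Your indexing $(-1)^{\ell-m}D^{-(\ell-m)-1}$ is in fact the one dictated by \eqref{inverse_jord}; the paper's slightly looser ``$(-1)^{\ell}D^{-\ell-1}$'' is a harmless relabeling.
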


\subsection{$q$-hypergeometric identity}
In order to calculate the dimension of $\pi_{k,N,\psi}$, we need a combinatorial identity related to ranks of triangular block matrices. Before we present the identity, we prove a lemma that will be needed in its proof. The lemma is a special case of a $q$-analogue of the Chu-Vandermonde identity, phrased in a manner that will be useful for us.
We recall the definition of the $q$-Pochhammer symbol:
\begin{equation}
(a;q)_n = \prod_{i=0}^{n-1}(1-aq^i).
\end{equation}
\begin{seclem}\label{rnmrchu}
Let $R_q(n,m,r)$ be the number of $n \times m$ matrices of rank $r$ over a finite field of size $q$ ($n$, $m$ may be $0$, by the convention that the empty matrix has rank $0$). Let $a$ be an integer greater or equal to $n+m$. Then
\begin{equation}\label{equichu}
\sum_{r \ge 0} R_q(n,m,r) (q;q)_{a-r} = q^{nm}\frac{(q;q)_{a-n}(q;q)_{a-m}}{(q;q)_{a-n-m}}.
\end{equation}
\end{seclem}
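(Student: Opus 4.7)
The plan is to recognize \eqref{equichu} as a specialization of the $q$-Chu--Vandermonde summation. The starting point is the closed form
\[
R_q(n,m,r) = \binom{n}{r}_{q}\binom{m}{r}_{q}\,|\gl{r}| = \binom{n}{r}_{q}\binom{m}{r}_{q}(-1)^r q^{\binom{r}{2}}(q;q)_r,
\]
obtained by encoding a rank-$r$ linear map $\bff^m \to \bff^n$ as the triple (image subspace of $\bff^n$, kernel subspace of $\bff^m$, isomorphism between cokernel and image).

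Next I would substitute this into the left side of \eqref{equichu} and convert each ordinary Pochhammer factor into one with a negative base via the standard identities
\[
\binom{n}{r}_{q} = \frac{(-1)^r q^{nr-\binom{r}{2}}(q^{-n};q)_r}{(q;q)_r}, \qquad (q;q)_{a-r} = \frac{(-1)^r q^{-ar+\binom{r}{2}}(q;q)_a}{(q^{-a};q)_r},
\]
the latter being valid since $a \ge n+m \ge r$. The four $(-1)^r$ contributions then multiply to $1$ and the $q$-exponents telescope to $(n+m-a)r$, so the left side takes the $q$-hypergeometric form
\[
(q;q)_a \cdot {}_{2}\phi_{1}\!\left(q^{-n}, q^{-m}; q^{-a}; q,\, q^{n+m-a}\right).
\]
Applying the $q$-Chu--Vandermonde identity ${}_{2}\phi_{1}(q^{-n},b;c;q,cq^{n}/b) = (c/b;q)_n/(c;q)_n$ with $b = q^{-m}$ and $c = q^{-a}$ (which makes $cq^n/b = q^{n+m-a}$ match) gives
\[
\text{LHS} = (q;q)_a \cdot \frac{(q^{m-a};q)_n}{(q^{-a};q)_n}.
\]
Converting both Pochhammer symbols back to ordinary form using $(q^{-A};q)_n = (-1)^n q^{-An+\binom{n}{2}}(q;q)_A/(q;q)_{A-n}$, once with $A=a$ and once with $A=a-m$ (the latter legal precisely because $a \ge n+m$), the $(-1)^n$'s and $q^{\binom{n}{2}}$'s cancel, leaving exactly the right side of \eqref{equichu}.

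The main obstacle I expect is the bookkeeping in the conversion step: one must verify that the four $q$-power contributions $\binom{r}{2}$, $nr-\binom{r}{2}$, $mr-\binom{r}{2}$, and $-ar+\binom{r}{2}$ telescope cleanly to $(n+m-a)r$ and that the four signs cancel in pairs, so that a bona fide ${}_{2}\phi_{1}$ with the right arguments emerges. A less streamlined alternative, which I would pursue only as a fallback, is induction on $n$ via the Pascal $q$-identity $\binom{n}{r}_q = \binom{n-1}{r}_q + q^{n-r}\binom{n-1}{r-1}_q$; however, the $q$-Chu--Vandermonde route is the most direct and makes transparent where the hypothesis $a \ge n+m$ enters.
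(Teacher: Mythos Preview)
Your proposal is correct and follows essentially the same route as the paper: both reduce the identity to the $q$-Chu--Vandermonde summation with the specialization $b=q^{-m}$, $c=q^{-a}$, and both use the same conversion identity $(q^{-x};q)_y = (-1)^y q^{\binom{y}{2}-xy}(q;q)_x/(q;q)_{x-y}$ to pass between the two forms. The only cosmetic difference is that the paper quotes Landsberg's closed form for $R_q(n,m,r)$ and then recognizes it inside the specialized Chu--Vandermonde sum, whereas you derive the equivalent expression $\binom{n}{r}_q\binom{m}{r}_q(-1)^r q^{\binom{r}{2}}(q;q)_r$ combinatorially and substitute it into the left-hand side first; the algebra is identical either way.
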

\begin{proof}
We start by stating a $q$-analogue of the Chu-Vandermonde identity \cite[Eq.~(1.5.2)]{gasper2004}:
\begin{equation}\label{gaspchu}
\sum_{r \ge 0} \frac{  (q^{-i};q)_r   (b;q)_r  }{(c;q)_r (q;q)_r } \left( \frac{cq^i}{b} \right)^r= \frac{(c/b;q)_i}{(c;q)_i},
\end{equation}
where $i$ is a non-negative integer. Note that the LHS of \eqref{gaspchu} is terminating, since the terms corresponding to $r>i$ vanish. The identity \eqref{gaspchu} is valid whenever $b \neq 0$ and $c \notin \{ q^{-1},\cdots,q^{-(i-1)} \}$. Choosing $i=n$, $b=q^{-m}$, $c=q^{-a}$, we obtain
\begin{equation}\label{chuapp}
\sum_{r \ge 0} \frac{  (q^{-n};q)_r   (q^{-m};q)_r  }{(q^{-a};q)_r (q;q)_r } q^{(n+m-a)r}= \frac{(q^{m-a};q)_n}{(q^{-a};q)_n}.
\end{equation}
We have the following formula for $R_q(n,m,r)$ by Landsberg \cite{landsberg1893}:
\begin{equation}\label{landform}
R_q(n,m,r)=\frac{(-1)^r  (q^{-n};q)_r   (q^{-m};q)_r  q^{(n+m)r-\binom{r}{2}}}{(q;q)_r}.
\end{equation}
By expressing the $r$-th summand of \eqref{chuapp} as
\begin{equation}
\begin{split}
&\frac{(-1)^r  (q^{-n};q)_r (q^{-m};q)_r  q^{(n+m)r-\binom{r}{2}}}{(q;q)_r} \cdot \frac{q^{-ar+\binom{r}{2}}}{(-1)^r (q^{-a};q)_r} \\
&\qquad = R_q(n,m,r) \cdot \frac{q^{-ar+\binom{r}{2}}}{(-1)^r (q^{-a};q)_r},
\end{split}
\end{equation}
we obtain that
\begin{equation} \label{chunotfinal}
\sum_{r \ge 0} R_q(n,m,r)  \frac{q^{-ar+\binom{r}{2}} (-1)^r}{(q^{-a};q)_r}  =\frac{(q^{m-a};q)_n}{(q^{-a};q)_n}.
\end{equation}
The proof is concluded by applying to \eqref{chunotfinal} the simple identity
\begin{equation}
(q^{-x};q)_y  = (-1)^y q^{\binom{y}{2}-xy} \frac{(q;q)_x}{(q;q)_{x-y}}
\end{equation}
with $(x,y) \in \{ (a,n), (a-m,n), (a,r)\}$.
\end{proof}
We now state our main combinatorial identity needed for computing the dimension. Let $k$ be a positive integer. We define the following family of functions.
\begin{equation} \label{fkdef}
f_{k,q}\Big(a;\substack{n_1,\ldots , n_k\\m_1,\ldots,m_k}\Big)=\sum_A \left(q;q\right)_{a-\mathrm{rk}A},
\end{equation}
where $\{ n_i\}_{i=1}^{k}, \{ m_j \}_{j=1}^{k}$ are sequences of non-negative integers, $a$ is an integer such that
\begin{equation}\label{srange}
a \ge \max \{ \sum_{j=1}^{i} n_j + \sum_{j=i}^{k} m_j \mid 1 \le i \le k \}
\end{equation}
and the sum is over all matrices $A\in M_{\sum\limits_{i=1}^k n_i \times\sum\limits_{j=1}^k m_j}(\bff)$ of the form
\begin{equation} \label{nonsqrblocks1}
A=\begin{pmatrix}
Y_{1,1} & Y_{1,2}&\cdots&Y_{1,k} \\
0&Y_{2,2}&\cdots&Y_{2,k}\\
\vdots&\vdots&&\vdots\\
0 & 0&\cdots &Y_{k,k}
\end{pmatrix},
\end{equation}
where $Y_{i,j}\in M_{n_i,m_j}(\bff)$ for all $1\leq i\leq j\leq k$.

\begin{prop} \label{fkprop}
	Let $k \ge 1$. For any sequences of non-negative integers, $\{ n_i\}_{i=1}^{k}$ and $\{ m_j \}_{j=1}^{k}$, and for any integer $a$ satisfying \eqref{srange}, we have
	\begin{equation} \label{fkiden}
	f_{k,q}\Big(a;\substack{n_1,\ldots , n_k\\m_1,\ldots,m_k}\Big)=q^{\sum\limits_{1\leq i\leq j\leq k}n_i m_j}\cdot \frac{\prod\limits_{i=0}^{k}\left(q;q\right)_{a-\sum\limits_{j=1}^{k-i}n_j-\sum\limits_{j=k-i+1}^{k}m_j}}{\prod\limits_{i=1}^{k}\left(q;q\right)_{a-\sum\limits_{j=1}^{k-i+1}n_j-\sum\limits_{j=k-i+1}^{k}m_j}}.
	\end{equation}
\end{prop}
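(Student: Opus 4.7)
The plan is to induct on $k$, with Lemma \ref{rnmrchu} serving as both the base case and the engine of the inductive step. For $k=1$, grouping the sum defining $f_{1,q}(a;n_1;m_1)$ by rank yields $\sum_{r\ge 0}R_q(n_1,m_1,r)(q;q)_{a-r}$, which Lemma \ref{rnmrchu} evaluates to $q^{n_1m_1}(q;q)_{a-n_1}(q;q)_{a-m_1}/(q;q)_{a-n_1-m_1}$; this is precisely \eqref{fkiden} at $k=1$.

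For the inductive step, write each $A$ from \eqref{nonsqrblocks1} as $A=\left(\begin{smallmatrix}X&Y\\0&Z\end{smallmatrix}\right)$ by peeling off the last block row and column, so that $X$ is a $(k-1)$-block-triangular matrix with row sizes $n_1,\dots,n_{k-1}$ and column sizes $m_1,\dots,m_{k-1}$, the block $Y$ (the vertical concatenation of $Y_{1,k},\dots,Y_{k-1,k}$) is an unconstrained $(N-n_k)\times m_k$ matrix with $N=\sum_{i=1}^k n_i$, and $Z=Y_{k,k}$ is unconstrained of size $n_k\times m_k$. Column operations driven by the pivots of $Z$ yield the rank identity
\begin{equation*}
\mathrm{rk}(A)=\mathrm{rk}(Z)+\mathrm{rk}(X\mid YE_Z),
\end{equation*}
where $E_Z$ is any $m_k\times(m_k-\mathrm{rk}(Z))$ matrix whose columns form a basis of $\ker Z$. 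For $Z$ of rank $s$, the linear map $Y\mapsto YE_Z$ is surjective onto $M_{N-n_k,\,m_k-s}(\bff)$ with fibers of size $q^{(N-n_k)s}$. Moreover, since the rightmost column block of $X$ is already free, appending a fresh free block $W$ of width $m_k-s$ and merging it with that column block produces a $(k-1)$-block-triangular matrix with column sizes $m_1,\dots,m_{k-2},\,m_{k-1}+m_k-s$. This yields the recurrence
\begin{equation*}
f_{k,q}\Big(a;\substack{n_1,\ldots,n_k\\m_1,\ldots,m_k}\Big)=\sum_{s\ge 0}R_q(n_k,m_k,s)\,q^{(N-n_k)s}\,f_{k-1,q}\Big(a-s;\substack{n_1,\ldots,n_{k-1}\\m_1,\ldots,m_{k-2},\,m_{k-1}+m_k-s}\Big).
\end{equation*}

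Substituting the induction hypothesis on the right, abbreviate $S_\ell=\sum_{j\le\ell}n_j+\sum_{j>\ell}m_j$, $D_\ell=S_{\ell-1}+n_\ell$, and $E=\sum_{1\le i\le j\le k}n_im_j$. A direct check shows that the shift $a\mapsto a-s$ cancels the $s$-dependence in the Pochhammers $(q;q)_{a-S_\ell}$ for $\ell\le k-2$ and $(q;q)_{a-D_\ell}$ for $\ell\le k-1$, while the modification $m_{k-1}\mapsto m_{k-1}+m_k-s$ transforms the exponent in $f_{k-1,q}$ into $E-n_km_k-(N-n_k)s$, exactly absorbing the $q^{(N-n_k)s}$ weight from the recurrence. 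Thus only a single $s$-dependent factor survives, the Pochhammer $(q;q)_{b-s}$ with $b=a-(N-n_k)$, and the recurrence collapses to
\begin{equation*}
f_{k,q}(a;n;m)=q^{E-n_km_k}\,\frac{\prod_{\ell=0}^{k-2}(q;q)_{a-S_\ell}}{\prod_{\ell=1}^{k-1}(q;q)_{a-D_\ell}}\,\sum_{s\ge 0}R_q(n_k,m_k,s)\,(q;q)_{b-s}.
\end{equation*}
The constraint $a\ge D_k=N+m_k$ from \eqref{srange} guarantees $b\ge n_k+m_k$, so Lemma \ref{rnmrchu} applies to the inner sum and returns $q^{n_km_k}(q;q)_{b-n_k}(q;q)_{b-m_k}/(q;q)_{b-n_k-m_k}$. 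Identifying $b-n_k=a-S_k$, $b-m_k=a-S_{k-1}$, and $b-n_k-m_k=a-D_k$ furnishes the three missing factors in \eqref{fkiden}, completing the induction.

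The main technical obstacle is the exponent bookkeeping in the inductive step: one must verify that the two simultaneous modifications $a\mapsto a-s$ and $m_{k-1}\mapsto m_{k-1}+m_k-s$ in the induction hypothesis conspire to leave exactly one $s$-dependent Pochhammer once the $q^{(N-n_k)s}$ weight from the rank decomposition is absorbed. Once this alignment is observed, the remaining sum is the $q$-Chu–Vandermonde identity of Lemma \ref{rnmrchu}, so the entire argument is ultimately a twofold application of that lemma.
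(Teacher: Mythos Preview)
Your proof is correct and proceeds by a genuinely different recursion from the paper's. In the paper's inductive step, all $k$ diagonal blocks $Y_{i,i}$ are frozen simultaneously (with ranks $r_1,\ldots,r_k$), Gaussian elimination is performed on every off-diagonal block, and one reduces to $f_{k-1,q}$ with shrunken parameters $n_i-r_i,\ m_{j+1}-r_{j+1}$; after substituting the induction hypothesis the resulting $k$-fold sum over $(r_1,\ldots,r_k)$ factors as a product of $k$ independent sums, each evaluated by Lemma~\ref{rnmrchu}. You instead peel off only the last block row and column, obtaining a one-parameter recursion in $s=\mathrm{rk}\,Y_{k,k}$ with the widened column size $m_{k-1}+m_k-s$; the modifications $a\mapsto a-s$ and $m_{k-1}\mapsto m_{k-1}+m_k-s$ then cancel in all but one Pochhammer, and a single application of Lemma~\ref{rnmrchu} finishes the step. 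Both arguments are complete. Yours is somewhat more streamlined for this proposition in isolation; the paper's simultaneous-diagonal decomposition, on the other hand, is the template reused in the proofs of Theorems~\ref{int_dim} and~\ref{int_char_thm}, where the analogous factorization into a product over the $k-1$ diagonal blocks is what reduces the dimension and character computations to Prasad's $k=2$ calculation.
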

\begin{proof}
 We will use the following notation:
\begin{equation}
I_{m,n} = \begin{pmatrix}
I_m&0\\0&0_{n-m}
\end{pmatrix}, \quad (m\leq n).
\end{equation}
We prove the proposition by induction on $k$. Let $k=1$. Then \begin{equation}
	f_{1,q}\Big(a;\substack{n\\m}\Big)=\sum_{A\in M_{n\times m}(\bff)} \left(q;q\right)_{a-\mathrm{rk}A}=\sum_{r\geq 0} R_q(n,m,r) \left(q;q\right)_{a-r}.
	\end{equation}
	By Lemma \ref{rnmrchu} we find that
	\begin{equation}
	f_1\Big(a;\substack{n\\m}\Big)=q^{nm}\frac{(q;q)_{a-n}(q;q)_{a-m}}{(q;q)_{a-n-m}},
	\end{equation}
	as needed. We now perform the induction step, i.e. assume that \eqref{fkiden} holds for $k-1$ in place of $k$, and prove it for $k$. We split the sum defining $f_{k,q}\Big(a;\substack{n_1,\ldots , n_k\\m_1,\ldots,m_k}\Big)$ as follows:
	\begin{equation} \label{splitfk}
	f_{k,q}\Big(a;\substack{n_1,\ldots , n_k\\m_1,\ldots,m_k}\Big) = \sum_{\substack{Y_{i,i}\in M_{n_i\times m_i}(\bff)\\ 1\leq i \leq k}} \sum_{\substack{Y_{i,j}\in M_{n_i\times m_j}(\bff)\\1\leq i<j\leq k}} \left(q;q\right)_{a-\mathrm{rk}A}.
	\end{equation}
	In the inner sum of \eqref{splitfk} the ranks of $Y_{i,i}$ are fixed for all $1\leq i \leq k$, so we set $r_i=\mathrm{rk}(Y_{i,i})$. There exist invertible matrices $E_i,C_{i}$ such that $Y_{i,i}=E_i I_{r_i,n}C_{i}$, for all $1\leq i \leq k$.
	So, one can write $A$ in the inner sum of \eqref{splitfk} as $\mathrm{diag}\left(E_1,\ldots,E_{k}\right)\cdot\widetilde{A}\cdot	\mathrm{diag}\left(C_1,\ldots,C_{k}\right),$ where
	\begin{equation}\label{atildematrix}
	\widetilde{A}=
	\begin{pmatrix}
	I_{r_1,n} & \widetilde{Y}_{1,2}& \cdots& \widetilde{Y}_{1,k} \\
	0 & I_{r_2,n}& \cdots &\widetilde{Y}_{2,k} \\
	\vdots&\vdots&\ddots&\vdots\\
	0 &0& \cdots &I_{r_{k},n}\\
	\end{pmatrix}
	\end{equation}
	and $\widetilde{Y}_{i,j}=E_{i}^{-1}Y_{i,j}C_{j}^{-1}$ for all $1\leq i<j\leq k$.
	Together with the fact that rank is invariant under elementary operations,  \eqref{splitfk} becomes
	\begin{equation} \label{splitfkrk}
	f_{k,q}\Big(a;\substack{n_1,\ldots , n_k\\m_1,\ldots,m_k}\Big) = \sum_{\substack{\forall 1\leq i \leq k: \\ r_i \ge 0}} \prod_{i=1}^{k} R_q(n_i,m_i,r_i) \sum_{\widetilde{A}} \left(q;q\right)_{a-\mathrm{rk}\widetilde{A}},
	\end{equation}
	where the inner sum is over matrices $\widetilde{A}$ of the form \eqref{atildematrix}. We can use Gaussian elimination operations on $\widetilde{Y}_{i,j}$ for all $1\leq i< j\leq k$ (which do not affect the rank of $\widetilde{A}$) as follows: the first $r_i$ rows of each $\widetilde{Y}_{i,j}$ are being canceled by the pivot elements in $I_{r_{i},n}$ (using elementary row operations) and the first $r_j$ columns of each $\widetilde{Y}_{i,j}$ are being canceled by the pivot elements in $I_{r_j,n}$ (using elementary column operations). Formally, the composition of these elementary operations maps the sequence of matrices $\{ \widetilde{Y}_{i,j} \}_{1\le i<j\le k}$ $\bff$-linearly to a sequence of matrices
	\begin{equation}\label{zijafterel}
	\{ \widehat{\widetilde{Y}}_{i,j} = \begin{pmatrix}
	0&0\\0&Z_{i,j}
	\end{pmatrix} \}_{1 \le i <j \le k},
	\end{equation}
	where $Z_{i,j}\in M_{(n-r_i)\times(n-r_{j})}(\bff)$. This linear map is a projection by construction. Its kernel is of size $q^{\sum_{t=1}^{k-1}r_t \sum_{\ell=t+1}^{k} m_{\ell}+\sum_{t=2}^{k}r_{t} \sum_{\ell=1}^{t-1}(n_{\ell}-r_{\ell})}$. The dimension of the kernel corresponds to the number of elements which we canceled. Equation \eqref{splitfkrk} becomes
	\begin{equation} \label{splitfkrkel}
	\begin{split}
	f_{k,q}\Big(a;\substack{n_1,\ldots , n_k\\m_1,\ldots,m_k}\Big) &= \sum_{\substack{\forall 1 \le i \le k: \\ r_i\geq 0}} \prod_{i=1}^{k} R_q(n_i,m_i,r_i) q^{\sum\limits_{t=1}^{k-1}r_t\sum\limits_{\ell=t+1}^k m_\ell+\sum\limits_{t=2}^k r_t\sum\limits_{\ell=1}^{t-1}\left(n_\ell-r_\ell\right)}\\
	&\qquad \cdot \sum_{\widehat{\widetilde{A}}} \left(q;q\right)_{a-\mathrm{rk}\widehat{\widetilde{A}}},
	\end{split}
	\end{equation}
	where the inner sum is over matrices of the form
	\begin{equation}
	\widehat{\widetilde{A}}=
	\begin{pmatrix}
	I_{r_1,n} & \widehat{\widetilde{Y}}_{1,2}& \cdots& \widehat{\widetilde{Y}}_{1,k} \\
	0 & I_{r_2,n}& \cdots &\widehat{\widetilde{Y}}_{2,k} \\
	\vdots&\vdots&\ddots&\vdots\\
	0 &0& \cdots &I_{r_{k},n}\\
	\end{pmatrix},
	\end{equation}
and $\widehat{\widetilde{Y}}_{i,j}$ are of the form defined in \eqref{zijafterel}.
Note that $\mathrm{rk}\widehat{\widetilde{A}}=\sum_{j=1}^k r_j+\mathrm{rk}Z,$
where
\begin{equation}
Z=
\begin{pmatrix}
Z_{1,2} & \cdots& Z_{1,k} \\
\vdots&\ddots&\vdots\\
0 &\cdots &Z_{k-1,k}\\
\end{pmatrix}.
\end{equation}
Hence, from \eqref{splitfkrkel} we obtain the following recursive relation:
\begin{equation} \label{splitfkrkrec}
\begin{split}
f_{k,q}\Big(a;\substack{n_1,\ldots , n_k\\m_1,\ldots,m_k}\Big) = \sum_{\substack{\forall 1 \le i \le k: \\r_i\geq 0}}& R_q(n_i,m_i,r_i) q^{\sum\limits_{t=1}^{k-1}r_t\sum\limits_{\ell=t+1}^k m_\ell+\sum\limits_{t=2}^k r_t\sum\limits_{\ell=1}^{t-1}\left(n_\ell-r_\ell\right)} \\ &\cdot f_{k-1,q}\Big(a-\sum\limits_{j=1}^k r_j;\substack{n_1-r_1,\ldots , n_{k-1}-r_{k-1}\\m_2-r_2,\ldots,m_k-r_k}\Big).
\end{split}
\end{equation}
Plugging the induction assumption in \eqref{splitfkrkrec} we get that $f_{k,q}\Big(a;\substack{n_1,\ldots , n_k\\m_1,\ldots,m_k}\Big)$ equals
\begin{equation} \label{beforeclosed}
\begin{split}
\sum_{\substack{\forall 1 \le i \le k: \\r_i\geq 0}}& \prod_{i=1}^{k}R_q(n_i,m_i,r_i) q^{\sum\limits_{t=1}^{k-1}r_t\sum\limits_{\ell=t+1}^k m_\ell+\sum\limits_{t=2}^k r_t\sum\limits_{\ell=1}^{t-1}\left(n_\ell-r_\ell\right)} \\ &\cdot q^{\sum\limits_{1\leq i\leq j\leq k-1}\left(n_i-r_i\right)\cdot\left( m_{j+1}-r_{j+1}\right)}\cdot \frac{\prod\limits_{i=0}^{k-1}\left(q;q\right)_{a-\sum\limits_{j=1}^{k}r_j-\sum\limits_{j=1}^{k-1-i}\left(n_j-r_j\right)-\sum\limits_{j=k-i}^{k-1}\left(m_{j+1}-r_{j+1}\right)}}{\prod\limits_{i=1}^{k-1}\left(q;q\right)_{a-\sum\limits_{j=1}^{k}r_j-\sum\limits_{j=1}^{k-i}\left(n_j-r_j\right)-\sum\limits_{j=k-i}^{k-1}\left(m_{j+1}-r_{j+1}\right)}}.
\end{split}
\end{equation}
Rearranging \eqref{beforeclosed}, we see that the sum over $r_1,\ldots,r_k$ may be written as a product over $k$ sums, where the $i$-th sum is over $r_i$:
\begin{equation}\label{beforeclosed2}
\begin{split}
f_{k,q}\Big(a;\substack{n_1,\ldots , n_k\\m_1,\ldots,m_k}\Big) =& \frac{q^{\sum\limits_{1\leq i\leq j\leq k-1}n_i m_{j+1}}}{\prod\limits_{i=1}^{k-1}\left(q;q\right)_{a-\sum\limits_{j=1}^{k-i}n_j-\sum\limits_{j=k-i}^{k-1}m_{j+1}}} \\ &\cdot {} \prod\limits_{i=1}^{k}\Big(\sum_{r_i \ge 0} R_q(n_{i},m_{i},r_{i})\left(q;q\right)_{a-r_{i}-\sum\limits_{j=1}^{i-1}n_j-\sum\limits_{j=i}^{k-1}m_{j+1}}\Big).
\end{split}
\end{equation}
Using Lemma \ref{rnmrchu} we substitute each inner sum of \eqref{beforeclosed2} with
\begin{equation}
q^{n_{i}\cdot m_{i}}\frac{(q;q)_{a-\sum\limits_{j=1}^{i}n_j-\sum\limits_{j=i}^{k-1}m_{j+1}}(q;q)_{s-\sum\limits_{j=1}^{i-1}n_j-\sum\limits_{j=i-1}^{k-1}m_{j+1}}}{(q;q)_{a-\sum\limits_{j=1}^{i}n_j-\sum\limits_{j=i-1}^{k-1}m_{j+1}}},
\end{equation}
and by simplifying we complete the induction step and obtain the desired identity.
\end{proof}
\begin{Rem}
Solomon \cite{Solomon1990} proved a relation between the following two quantities: the number of placements of $k$ non-attacking rooks on a $n \times n$ chessboard, counted  with certain weights depending on $q$, and the number of matrices in $M_{n \times n}(\bff)$ of rank $k$. Haglund generalized Solomon's result to any "Ferrers' board" \cite[Thm.~1]{haglund1998}, which means that the number of matrices of the form \eqref{nonsqrblocks1} over $\bff$ of rank $k$ is related to the $q$-rook polynomial $R_k(B,q)$, where $B$ is a certain Ferrers' board associated with \eqref{nonsqrblocks1}. For the definition of a Ferrers' board and $R_k(B,q)$, see the introduction to the paper by Garsia and Remmel \cite{garsia1986}. In particular, Proposition \ref{fkprop} may be deduced from a result of Garcia and Remmel on $q$-rook polynomials, see \cite[Cor.~2]{haglund1998}. Our proof of Proposition \ref{fkprop} is direct and so we believe it is more accessible. More importantly, the ideas used in the proof reappear in the proofs of Theorem \ref{int_dim} and Theorem \ref{int_char_thm}.
\end{Rem}

\subsection{Arithmetic properties of certain polynomials}
For any $d$ dividing $n$ and any $k \ge 2$, let
\begin{equation}\label{aknddef}
a_{k;n,d}(x)=\frac{x^{d}-1}{x^{n}-1} \sum_{m: \, d \mid m \mid n} \mu\left(\frac{m}{d} \right) (-1)^{k(n- \frac{n}{m})} x^{(k-2)\frac{n}{2} (\frac{n}{m}-1)} \in \mathbb{Q}(x),
\end{equation}
where $\mu: \mathbb{N}_{>0} \to \mathbb{C}$ is the M\"obius function, defined by
\begin{equation}
\mu(n) = \begin{cases} 0 & \text{ if } p^2 \mid n \text{ for some prime }p, \\ (-1)^m & \text{ if }n=p_1 p_2 \ldots p_m, \, p_i \text{ are distinct primes}. \end{cases}
\end{equation}
We recall the following properties of $\mu$ \cite[Ch.~2]{ireland1990}.
\begin{itemize}
	\item The divisor sum $\sum_{d \mid n} \mu(d)$ is given by
	\begin{equation} \label{mobius_prop}
	\sum_{d \mid n} \mu(d) = \delta_{1,n}.
	\end{equation}
	\item The M\"obius function is multiplicative.
\end{itemize}
\begin{seclem}\label{lemand} \
	Let $k \ge 2$. The following hold.
	\begin{itemize}
		\item[(I)] For any $d\mid n$, $a_{k;n,d}(x)$ is a polynomial in $\mathbb{Z}[x]$. Furthermore, in case $d \notin \{ n, \frac{n}{2} \}$, $a_{k;n,d}(x)$ is divisible by $x^d-1$. In the remaining cases we have
		\begin{equation}\label{twocaseslem}
		a_{k;n,d}(x) = \begin{cases} (-1)^{k(n-1)} & \text{if }d=n,\\ \frac{x^{\frac{(k-2)n}{2}}+(-1)^{k+1}}{x^{\frac{n}{2}}+1} & \text{if }d= \frac{n}{2}. \end{cases}
		\end{equation}	
		\item[(II)] If $k > 2$ we have $\deg a_{n,d}(x)= \frac{(n(k-2)-2d)(n-d)}{2d}$, and $a_{n,d}$ has leading coefficient $(-1)^{k(n-\frac{n}{d})}$. If $k=2$, we have $a_{n,d} = \delta_{n,d}$.
		\item[(III)] Assume $k>2$. For any prime power $q$, $a_{k;n,d}(q)$ is a non-zero integer. Its sign equals the sign of $(-1)^{k(n-\frac{n}{d})}$, i.e. it is a positive integer unless $k$ is odd, $n$ is even and $2 \nmid \frac{n}{d}$.
	\end{itemize}
\end{seclem}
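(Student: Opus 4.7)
The plan is to substitute $r=n/m$ in \eqref{aknddef} and rewrite
\[
a_{k;n,d}(x) = \frac{x^d-1}{x^n-1}\, S(x), \qquad S(x) := \sum_{r \mid D} \mu(D/r)(-1)^{k(n-r)}\, x^{T(r-1)},
\]
where $D = n/d$ and $T = (k-2)n/2$. A quick parity check (using that $y \mid n$ odd forces $y-1$ even) shows that $T(r-1)$ is always a non-negative integer, so $S(x) \in \mathbb{Z}[x]$. The three parts of the lemma then correspond to three different pieces of information about $S$.

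For part (I), the exceptional cases $d=n$ (so $D=1$) and $d=n/2$ (so $D=2$) lead to a one-term or two-term sum $S(x)$, from which \eqref{twocaseslem} follows by direct polynomial division. In the remaining case $d\notin\{n,n/2\}$, my plan is to prove the stronger statement that $(x^n-1)\mid S(x)$, which simultaneously yields $a_{k;n,d}(x)\in\mathbb{Z}[x]$ and the asserted divisibility by $x^d-1$. To do this I evaluate $S(\zeta)=0$ at every $n$-th root of unity $\zeta=e^{2\pi i a/n}$. The crucial computation is
\[
\zeta^{T(r-1)} = e^{\pi i (k-2)a(r-1)} = (-1)^{ka(r-1)},
\]
using $(-1)^{-2a(r-1)}=1$, which reduces $S(\zeta)$ to $(-1)^{k(n+a)}\sum_{r\mid D}\mu(D/r)(-1)^{kr(a+1)}$. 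When $k(a+1)$ is even the inner sum collapses via \eqref{mobius_prop} to $\delta_{D,1}$, vanishing for $D>1$. When $k$ is odd and $a$ is even it becomes $\sum_{r\mid D}\mu(D/r)(-1)^r$, and an explicit analysis splitting $D=2^s D'$ with $D'$ odd shows this sum vanishes unless $D\in\{1,2\}$.

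For part (II), the top-degree monomial of $S(x)$ is $(-1)^{k(n-D)}x^{T(D-1)}$ (from $r=D$), so dividing by the monic polynomial $(x^n-1)/(x^d-1)$ of degree $n-d$ yields the claimed degree and leading coefficient. For $k=2$, $T=0$ collapses $S(x)$ to $\sum_{r\mid D}\mu(D/r)=\delta_{D,1}$, whence $a_{2;n,d}(x)=\delta_{n,d}$. For part (III), the idea is that $S(q)$ is dominated by its leading monomial at every prime power $q$. Since $S$ has at most $\tau(D)$ terms, each of coefficient $\pm 1$, and the second-largest exponent $T(D/p-1)$ (with $p$ the smallest prime divisor of $D$) lies at least $T(D-D/p)\ge TD/2\ge D/2$ below the leading exponent, the elementary bound $\tau(D)-1\le D-1<2^{D/2}$ gives
\[
\left|S(q)-(-1)^{k(n-D)}q^{T(D-1)}\right| \le (\tau(D)-1)\, q^{T(D/p-1)} < q^{T(D-1)} \qquad (q\ge 2),
\]
using $T\ge 1$ in all non-trivial cases. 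Consequently $S(q)$ is nonzero with sign $(-1)^{k(n-D)}$, and the same is true of $a_{k;n,d}(q)$ since $(q^d-1)/(q^n-1)>0$; the formulae from \eqref{twocaseslem} handle the cases $d\in\{n,n/2\}$ separately, and a brief parity check matches $(-1)^{k(n-D)}$ to the sign claim in the lemma.

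The main obstacle I anticipate is the case analysis in part (I) for $k$ odd and $a$ even, specifically the vanishing of $\sum_{r\mid D}\mu(D/r)(-1)^r$ for $D\notin\{1,2\}$, which forces a split into subcases according to the 2-adic valuation of $D$ and uses that $\mu(2m)=-\mu(m)$ for odd $m$.
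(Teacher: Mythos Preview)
Your proposal is correct and follows essentially the same strategy as the paper's proof: evaluating at $n$-th roots of unity to establish the divisibility in part (I), reading off the leading term for part (II), and showing the leading monomial of the numerator dominates for part (III). Your parametrization of roots of unity by the residue $a$ (splitting on the parity of $k(a+1)$) is equivalent to the paper's split into the cases $n$ odd or $k$ even versus $n$ even and $k$ odd with $z^{n/2}=\pm 1$; and your divisor-count bound $\tau(D)-1<2^{D/2}\le q^{TD/2}$ in part (III) replaces the paper's geometric-series estimate, but the underlying idea is identical.
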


\begin{proof}
	We begin by proving the first part of the lemma. If $d \in \{n, \frac{n}{2}\}$, a short calculation reveals that \eqref{twocaseslem} holds. From now on we assume that $d \notin \{n, \frac{n}{2} \}$. We shall show that
	\begin{equation}\label{divpartlem}
	x^n-1 \mid \sum_{m: \, d \mid m \mid n} \mu\left(\frac{m}{d}\right) (-1)^{k(n- \frac{n}{m})} x^{(k-2)\frac{n}{2} (\frac{n}{m}-1)}
	\end{equation}
	in $\mathbb{Q}[x]$, which implies that $a_{k;n,d}(x)$ is a polynomial divisible by $x^d-1$. Gauss's lemma, applied to \eqref{divpartlem}, implies that $a_{k;n,d}(x)\in\mathbb{Z}[x]$. We now prove \eqref{divpartlem}.
	
	Let $z$ be a root of unity of order dividing $n$. Assume first that $n$ is odd or that $k$ is even. Then for all $m\mid n$ we have
	\begin{equation}
	z^{(k-2)\frac{n}{2}(\frac{n}{m}-1)} = (z^n)^{(k-2)\frac{\frac{n}{m}-1}{2}}= 1.
	\end{equation}
	Hence, using \eqref{mobius_prop},
	\begin{equation}\label{oddiden}
	\sum_{m: \, d \mid m \mid n} \mu\left(\frac{m}{d} \right) (-1)^{k(n- \frac{n}{m})} z^{(k-2)\frac{n}{2} (\frac{n}{m}-1)} = \sum_{m: \, d \mid m \mid n} \mu\left(\frac{m}{d} \right)=\sum_{a: \, a \mid \frac{n}{d}} \mu(a) = \delta_{d,n} = 0.
	\end{equation}	
	Now we assume instead that $n$ is even and $k$ is odd. We are led to consider two cases.
	\begin{itemize}
		\item If $z^{\frac{n}{2}} = -1$ then for all $m\mid n$ we have,
		\begin{equation}
		z^{(k-2)\frac{n}{2}(\frac{n}{m}-1)} = (-1)^{\frac{n}{m}-1}.
		\end{equation}
		Hence,  using \eqref{mobius_prop},
		\begin{equation}\label{eveniden1}
		\sum_{m: \, d \mid m \mid n} \mu\left(\frac{m}{d}\right) (-1)^{k(n- \frac{n}{m})} z^{(k-2)\frac{n}{2} (\frac{n}{m}-1)} = -\sum_{m: \, d \mid m \mid n} \mu\left(\frac{m}{d}\right) = -\sum_{a \mid \frac{n}{d}} \mu(a) = -\delta_{d,n} = 0.
		\end{equation}
		\item If $z^{\frac{n}{2}}=1$ then for all $m\mid n$ we have,
		\begin{equation}
		z^{(k-2)\frac{n}{2}(\frac{n}{m}-1)} = 1.
		\end{equation}
		Hence,
		\begin{equation} \label{eveniden2}
		\begin{split}
		\sum_{m: \, d \mid m \mid n} \mu\left(\frac{m}{d}\right) (-1)^{k(n- \frac{n}{m})} z^{(k-2)\frac{n}{2} (\frac{n}{m}-1)} &= \sum_{m: \, d \mid m \mid n} \mu\left(\frac{m}{d}\right) (-1)^{\frac{n}{m}} = \sum_{a \mid \frac{n}{d}} \mu(a) (-1)^{\frac{n}{ad}} \\
		&= \sum_{\substack{a \mid \frac{n}{d} \\ 2 \mid \frac{n}{ad} }} \mu(a)  - \sum_{\substack{a \mid \frac{n}{d} \\ 2 \nmid \frac{n}{ad} }} \mu(a) \\
		& = \begin{cases} 0-\sum_{a \mid \frac{n}{d}}\mu(a) &\text{ if } 2 \nmid \frac{n}{d} \\ \sum_{a \mid \frac{n}{2d}} \mu(a) - \sum_{\substack{a \mid \frac{n}{d}\\ 2 \mid a}} \mu(2 \cdot \frac{a}{2}) & \text{ if } 2 \mid \frac{n}{d}, 4 \nmid \frac{n}{d} \\ \sum_{a \mid \frac{n}{2d}} \mu(a) - \sum_{\substack{a \mid \frac{n}{d} \\ 2 \nmid \frac{n}{ad} }} \mu(4\cdot \frac{a}{4}) & \text{ if } 4 \mid \frac{n}{d}\end{cases} \\
		&=  \begin{cases} -\delta_{d,n}& \text{ if }2 \nmid \frac{n}{d} \\ \delta_{2d,n} - \mu(2) \delta_{2d,n}  & \text{ if }2 \mid \frac{n}{d}, 4 \nmid \frac{n}{d} \\ \delta_{2d,n} & \text{ if }4 \mid \frac{n}{d}\end{cases} \\&=0.
		\end{split}
		\end{equation}
	\end{itemize}
	Equations \eqref{oddiden}, \eqref{eveniden1} and \eqref{eveniden2} show that the RHS of \eqref{divpartlem} vanishes on each root of the separable polynomial $x^n-1$, which establishes \eqref{divpartlem}. This concludes the proof of the first part of the lemma.
	
	The second part of the lemma for $k>2$ follows by noticing that the numerator of $a_{k;n,d}(x)$ has degree $d + (k-2)\frac{n}{2}(\frac{n}{d}-1)$ (arising from the term corresponding to $m=d$) and leading coefficient equal to $(-1)^{k(n-\frac{n}{d})}$, while the denominator of $a_{k;n,d}(x)$ has degree $n$ and leading coefficient equal to $1$.
	
	When $k=2$, all terms in the sum in \eqref{aknddef} are constants, and we have
	\begin{equation}
	a_{2;n,d}(x)=\frac{x^{d}-1}{x^{n}-1} \sum_{m: \, d \mid m \mid n} \mu\left(\frac{m}{d} \right) = \frac{x^d-1}{x^n-1}\delta_{n,d} = \delta_{n,d}.
	\end{equation}
	We now turn to the third part of the lemma. Since $a_{k;n,d}(x)$ has integer coefficients, $a_{k;n,d}(q)$ is an integer. We now determine its sign when $k>2$, and in particular show that it is non-zero.
	
	Since $q^d-1$, $q^n-1$, $q^{\frac{n}{2}}$ are positive, we deal with the expression
	\begin{equation}
	\begin{split}
	\widetilde{a}_{k;n,d}(q) :&= \frac{q^n-1}{q^d-1}q^{(k-2)\frac{n}{2}}\cdot a_{k;n,d}(q) \\&= \sum_{m: \, d\mid m \mid n} \mu\left(\frac{m}{d}\right) (-1)^{k(n-\frac{n}{m})} (q^{(k-2)\frac{n}{2}})^{\frac{n}{m}}\\&= \sum_{a \mid \frac{n}{d}} \mu(a) (-1)^{k(n-\frac{n}{ad})} (q^{(k-2)\frac{n}{2}})^{\frac{n}{ad}},
	\end{split}
	\end{equation}
	whose sign is the same as the sign of $a_{k;n,d}(q)$.
	If $d=n$ then
	\begin{equation}
	(-1)^{k(n-\frac{n}{d})} \widetilde{a}_{k;n,d}(q) = q^{(k-2)\frac{n}{2}} >0.
	\end{equation}
	If $d=\frac{n}{2}$ then
	\begin{equation}
	(-1)^{k(n-\frac{n}{d})} \widetilde{a}_{k;n,d}(q) =  (q^{(k-2)\frac{n}{2}})^{2} +(-1)^{k+1} q^{(k-2)\frac{n}{2}} > 0.
	\end{equation}
	If $\frac{n}{d} \ge 3$, we set $t = q^{(k-2)\frac{n}{2}}$. Then, $t \ge 2^{\frac{3}{2}} >2$ and
	\begin{equation}
	\begin{split}
	(-1)^{k(n-\frac{n}{d})} \widetilde{a}_{k;n,d}(q) &\ge (q^{(k-2)\frac{n}{2}})^{\frac{n}{d}} - \sum_{1 \le i \le \frac{n}{2d}} (q^{(k-2)\frac{n}{2}})^i \ge (q^{(k-2)\frac{n}{2}})^{\frac{n}{d}} - \frac{(q^{(k-2)\frac{n}{2}})^{\frac{n}{2d}}}{1-q^{-(k-2)\frac{n}{2}}} \\
	& = (q^{(k-2)\frac{n}{2}})^{\frac{n}{2d}} \left( (q^{(k-2)\frac{n}{2}})^{\frac{n}{2d}} - \frac{1}{1-q^{-(k-2)\frac{n}{2}}}\right)\\
	& \ge (q^{(k-2)\frac{n}{2}})^{\frac{n}{2d}} \left( (q^{(k-2)\frac{n}{2}})^{1.5} - \frac{1}{1-q^{-(k-2)\frac{n}{2}}}\right) \\
	& = \frac{(q^{(k-2)\frac{n}{2}})^{\frac{n}{2d}}}{1-q^{-(k-2)\frac{n}{2}}} \left(t^{\frac{1}{2}}(t-1) - 1\right) > 0.
	\end{split}
	\end{equation}

\end{proof}

\begin{Rem}
	The polynomials $a_{k;n,q}(x)$ may be expressed using the necklace polynomials (see Moreau \cite{moreau1872}), defined by 
	\begin{equation}
	M_n(x) = \frac{1}{n} \sum_{d \mid n} \mu(d) x^\frac{n}{d}.
	\end{equation}
	Indeed,
	\begin{equation}
	a_{k;n,d}(x) = \frac{x^d-1}{x^n-1} \cdot \left(\frac{(-1)^n}{x^\frac{n}{2}} \right)^{k-2} \cdot M_{\frac{n}{d}}\left(\left(-x^\frac{n}{2} \right)^{k-2}\right).
	\end{equation}
\end{Rem}

\section{Calculation of the Dimension of $\pi_{k,N,\psi}$}\label{dim_sec}

Here we prove Theorem \ref{int_dim}. Given $U\in N$, we write it in the notation of \eqref{udef}.
From \eqref{projform},
\begin{equation} \label{dim_first}
	\mathrm{dim}\left(\pi_{k,N,\psi}\right)=\frac{1}{|N|}\sum_{\unimat \in N}\Theta_\theta (\unimat)\overline{\psi }(\unimat)= \frac{1}{q^{\binom{k}{2}n^{2}}}\sum_{\unimat \in N}\Theta_\theta\left(\unimat\right)\overline{\psi}\left(\unimat\right).
\end{equation}
By Corollary \ref{greencor}, the value $\Theta_\theta(\unimat)$ is determined by $\mathrm{dim}_{\bff_{kn}}\ker (\unimat-I)$ which is in turn determined by $\mathrm{rank}_{\bff_{kn}}(\unimat-I)$.
Therefore, we will start by splitting the sum in \eqref{dim_first} by the $X_{i,i}$, $1\leq i\leq k-1$.
\begin{equation}
	\mathrm{dim}\left(\pi_{k,N,\psi}\right)=\frac{1}{q^{\binom{k}{2}n^{2}}} \sum_{\substack{X_{i,i}\in M_n(\bff)\\1\leq i \leq k-1}}
	\sum_{\substack{X_{i,j}\in M_n\left(\bff\right)\\1\le i< j \le k-1}}\Theta_\theta\left(\unimat\right)\overline{\psi}\left(\unimat\right).
\end{equation}
The character $\psi\left(\unimat\right)=\psi\left(X_{1,1},\ldots,X_{k-1,k-1}\right)$ is determined by the traces of $X_{i,i}$, $1\leq i\leq k-1$. Hence,
\begin{equation} \label{splitsumdim}
\mathrm{dim}\left(\pi_{k,N,\psi}\right)=\frac{1}{q^{\binom{k}{2}n^{2}}} \sum_{\substack{X_{i,i}\in M_n(\bff)\\1\leq i \leq k-1}}   \overline{\psi}\left(\unimat\right)
\sum_{\substack{X_{i,j}\in M_n\left(\bff\right)\\1 \le i < j \le k-1}}\Theta_\theta\left(\unimat\right).
\end{equation}
In the inner sum of \eqref{splitsumdim} set $r_i=\mathrm{rk}\left(X_{i,i}\right)$ for $1\leq i \leq k-1$. There exist invertible matrices $E_i,C_{i+1}$ such that $X_{i,i}=E_i I_{r_i,n}C_{i+1}$.
So, one can write $\unimat$ in the inner sum of \eqref{splitsumdim} as $I_{kn}$ plus
\begin{equation} \label{elementaryop1}
	\mathrm{diag}\left(E_1,\ldots,E_{k-1},I_n\right)
	\begin{pmatrix}
	0 & I_{r_1,n} &  \cdots&\widetilde{X}_{1,k-2}&\widetilde{X}_{1,k-1} \\
	0 & 0 &  \cdots &\widetilde{X}_{2,k-2}&\widetilde{X}_{2,k-1} \\
	\vdots&\vdots&&\vdots&\vdots\\
	0 & 0 & \cdots & 0 &I_{r_{k-1},n}\\
	0 & 0 & \cdots & 0 &0
	\end{pmatrix}
	\mathrm{diag}\left(I_n,C_2,\ldots,C_{k}\right),
\end{equation}
where $\widetilde{X}_{i,j}=E_{i}^{-1}X_{i,j}C_{j+1}^{-1}$ for all $1 \le i< j \le k-1$.
 Together with the fact that rank is invariant under elementary operations, we now have
\begin{equation}\label{dim0}
\begin{split}
\mathrm{dim}\left(\pi_{k,N,\psi}\right)=\frac{1}{q^{\binom{k}{2}n^{2}}}  &\sum_{\substack{X_{i,i}\in M_n(\bff)\\1\leq i \leq k-1}} \overline{\psi}\left(\unimat\right)
\\
&\cdot\sum_{\substack{\widetilde{X}_{i,j}\in M_n\left(\bff\right)\\1 \le i < j \le k-1}}\Theta_\theta \left(I_{kn}+
			\begin{pmatrix}
			0 & I_{r_1,n} &  \cdots&\widetilde{X}_{1,k-2}&\widetilde{X}_{1,k-1} \\
			0 & 0 &  \cdots &\widetilde{X}_{2,k-2}&\widetilde{X}_{2,k-1} \\
			\vdots&\vdots&&\vdots&\vdots\\
			0 & 0 & \cdots & 0 &I_{r_{k-1},n}\\
			0 & 0 & \cdots & 0 &0
			\end{pmatrix}\right).
\end{split}
\end{equation}
As in the proof of Proposition \ref{fkprop}, we can use Gaussian elimination operations on $\widetilde{X}_{i,j}$ for all $1 \le i <j \le k-1$ (which do not affect the rank nor dimension of the kernel of the matrix minus $I_{kn}$, and the number of Jordan blocks is not affected as well) in such a way that the sequence of matrices $\{ \widetilde{X}_{i,j} \}_{1\le i<j\le k-1}$ is mapped $\bff$-linearly to a sequence of matrices
\begin{equation}
\{ \widehat{\widetilde{X}}_{i,j} = \begin{pmatrix}
0&0\\0&Y_{i,j}
\end{pmatrix} \}_{1 \le i <j \le k-1},
\end{equation}
where $Y_{i,j}\in M_{(n-r_i)\times(n-r_{j})}(\bff)$. The kernel of this mapping is of size $q^{\sum_{i=1}^{k-2}r_i(k-i-1)n+\sum_{i=2}^{k-1}r_i\sum_{j=1}^{i-1}(n-r_j)}$. The dimension of the kernel corresponds to the number of elements which we cancel. Equation \eqref{dim0} becomes
\begin{equation}\label{dim1}
\begin{split}
\mathrm{dim}\left(\pi_{k,N,\psi}\right)=\frac{1}{q^{\binom{k}{2}n^{2}}}  &\sum_{\substack{X_{i,i}\in M_n(\bff)\\1\leq i \leq k-1}} \overline{\psi}\left(\unimat\right)
q^{\sum\limits_{i=1}^{k-2}r_i(k-i-1)n+\sum\limits_{i=2}^{k-1}r_i\sum\limits_{j=1}^{i-1}(n-r_j)}\\
&\cdot\sum_{\substack{Y_{i,j}\in M_{(n-r_i)\times(n-r_{j})}(\bff)\\ 1 \le i <j \le k-1}}\Theta_\theta \left(g\right),
\end{split}
\end{equation}
where
\begin{equation}
g= I_{kn}+\begin{pmatrix}
0 & I_{r_1,n} &  \cdots&\widehat{\widetilde{X}}_{1,k-2}&\widehat{\widetilde{X}}_{1,k-1} \\
0 & 0 &  \cdots &\widehat{\widetilde{X}}_{2,k-2}&\widehat{\widetilde{X}}_{2,k-1} \\
\vdots&\vdots&&\vdots&\vdots\\
0 & 0 & \cdots & 0 &I_{r_{k-1},n}\\
0 & 0 & \cdots & 0 &0
\end{pmatrix}
\end{equation}
According to the character formula \eqref{green}, we can calculate $\Theta_\theta(g)$. In this case $m=kn$, $g=s \cdot u$ where $s=I_{kn}$, so $\lambda=1$ and
$$t=\mathrm{dim}\ \mathrm{ker}(g-I)=kn-\mathrm{rk}(g-I)=kn-\sum_{i=1}^{k-1}r_i -\mathrm{rk}A,$$
where
\begin{equation} \label{nonsqrblocks}
A=\begin{pmatrix}
Y_{1,2} & \cdots&Y_{1,k-1} \\
\vdots&&\vdots\\
0 & \cdots &Y_{k-2,k-1}
\end{pmatrix},\quad 1 \le i < j \le k-1.
\end{equation}
So,
\begin{align}
	\Theta_\theta \left(g\right) ={}& (-1)^{kn-1}(1-q)(1-q^2)\cdots(1-q^{kn-\sum\limits_{i=1}^{k-1}r_i -\mathrm{rk}A-1})\nonumber\\
	={}&(-1)^{kn-1}(q;q)_{kn-\sum\limits_{i=1}^{k-1}r_i -\mathrm{rk}A-1}. \label{cuspchar}
\end{align}
Equation \eqref{dim1} can now be written as
\begin{equation} \label{dim2}
\begin{split}
\mathrm{dim}\left(\pi_{k,N,\psi}\right)=\frac{1}{q^{\binom{k}{2}n^{2}}}  &\sum_{\substack{X_{i,i}\in M_n(\bff)\\1\leq i \leq k-1}} \overline{\psi}\left(\unimat\right)
q^{\sum \limits_{i=1}^{k-2}r_i(k-i-1)n+\sum\limits_{i=2}^{k-1}r_i\sum \limits _{j=1}^{i-1}(n-r_j)}\\
&\cdot(-1)^{kn-1}\sum_{A}(q;q)_{kn-\sum \limits_{i=1}^{k-1}r_i -\mathrm{rk}A-1},
\end{split}
\end{equation}
where the inner sum is over all matrices of the form \eqref{nonsqrblocks} and by the definition \eqref{fkdef} it is equal to
\begin{equation}
f_{k-2,q}\Big(kn-\sum_{i=1}^{k-1}r_i-1;\substack{n-r_1,\ldots , n-r_{k-2}\\n-r_2,\ldots,n-r_{k-1}}\Big).
\end{equation}
 By applying Proposition \ref{fkprop} we replace the inner sum in \eqref{dim2} by
\begin{equation}
 q^{\sum\limits_{1\leq i\leq j\leq k-2}(n-r_i)\cdot (n-r_{j+1})}\cdot \frac{\prod\limits_{i=0}^{k-2}\left(q;q\right)_{kn-\sum\limits_{j=1}^{k-1}r_j-1-\sum\limits_{j=1}^{k-2-i}(n-r_j)-\sum\limits_{j=k-i-1}^{k-2}(n-r_{j+1})}}{\prod\limits_{i=1}^{k-2}\left(q;q\right)_{kn-\sum\limits_{j=1}^{k-1}r_j-1-\sum\limits_{j=1}^{k-i-1}(n-r_j)-\sum\limits_{j=k-i-1}^{k-2}(n-r_{j+1})}},
\end{equation}
which equals
\begin{equation}
q^{\sum\limits_{1\leq i\leq j\leq k-2}(n-r_i)\cdot (n-r_{j+1})}\cdot \frac{\prod\limits_{i=1}^{k-1}\left(q;q\right)_{2n-1-r_i}}{\left( (q;q)_{n-1} \right)^{(k-2)}}.
\end{equation}
Now \eqref{dim2} becomes
\begin{equation} \label{sumthenprod}
\mathrm{dim}\left(\pi_{k,N,\psi}\right)=\frac{(-1)^{kn-1}}{\left((q;q)_{n-1}\right)^{\left(k-2\right)}q^{(k-1)n^{2}}}  \sum_{\substack{X_{i,i}\in M_n(\bff)\\1\leq i \leq k-1}}\prod_{i=1}^{k-1} \overline{\psi_{0}}\left(\mathrm{tr}\left(X_{i,i}\right)\right) \left(q;q\right)_{2n-1-r_i}.
\end{equation}
Changing the order of sum and product in \eqref{sumthenprod} we get that
\begin{equation}\label{prasadproducts}
\mathrm{dim}\left(\pi_{k,N,\psi}\right)=\frac{(-1)^{kn-1}}{\left((q;q)_{n-1}\right)^{\left(k-2\right)}q^{(k-1)n^{2}}} \prod_{i=1}^{k-1} \sum_{X_{i,i}\in M_n(\bff)} \overline{\psi_{0}}\left(\mathrm{tr}\left(X_{i,i}\right)\right) \left(q;q\right)_{2n-1-r_i}.
\end{equation}
From Section 5 of \cite{prasad2000}, each inner sum in \eqref{prasadproducts} is equal to
\begin{equation}\label{prasadres}
\sum_{X_{i,i}\in M_n(\bff)} \overline{\psi_{0}}\left(\mathrm{tr}\left(X_{i,i}\right)\right) \left(q;q\right)_{2n-1-r_i} = (-1)^{n} \cdot q^{n^2} \cdot q^{\binom{n}{2}} (q;q)_{n-1}.
\end{equation}
Plugging \eqref{prasadres} in \eqref{prasadproducts}, we obtain
\begin{equation}\label{prasadproducts2}
\mathrm{dim}\left(\pi_{k,N,\psi}\right)= q^{(k-1)\binom{n}{2}} (-1)^{n-1}(q;q)_{n-1} =q^{(k-2)\binom{n}{2}} \frac{|\gl{n}|}{q^n-1},
\end{equation}
as needed. \qed

\section{Calculation of the Character $\Theta_{k,N,\psi}$} \label{char_sec}
In this section we prove Theorem \ref{int_char_thm}. Namely, we calculate $\Theta_{k,N,\psi}$. From now on we will use the following notations:

\begin{equation}
h_{g;\unimat} =  \begin{pmatrix}
		g & X_{1,1} & X_{1,2} & \cdots&X_{1,k-2}&X_{1,k-1} \\
		0 & g & X_{2,2} & \cdots &X_{2,k-2}&X_{2,k-1} \\
		0& 0 & g & \cdots &X_{3,k-2},& X_{3,k-1}\\
		\vdots&\vdots&\vdots&&\vdots&\vdots\\
		0 & 0 & 0& \cdots & g &X_{k-1,k-1}\\
		0 & 0 & 0& \cdots & 0 &g
	\end{pmatrix},
\end{equation}
where $\unimat$ (and so $X_{i,j}$) were defined in \eqref{udef}.
Note that $h_{I_n,U}=U$. We also define
\begin{equation}
\Delta^r\left(g\right)  = \mathrm{diag}\left(g,\ldots,g\right)\in \Delta^r\left(\gl{m}\right), \qquad  g\in \gl{m}.
\end{equation} By definition,
\begin{equation} \label{charbyproj}
\begin{split}
\Theta_{k,N,\psi}\left(g\right)=\mathrm{tr}\left(\pi_{k,N,\psi}\left(g\right)\right)={}& \mathrm{tr}\left(\pi(\Delta^k(g)){\restriction_{V_{k,N,\psi}}}\right)\\={}& \mathrm{tr}\left(\pi(\Delta^k(g))\circ P_{k,N,\psi}\right).
\end{split}
\end{equation}
Substituting \eqref{projform} into \eqref{charbyproj} we have
\begin{equation}\label{thetaexpansion1}
\begin{split}
 	\Theta_{k,N,\psi}\left(g\right)={}& \mathrm{tr}\left(\frac{1}{q^{\binom{k}{2}n^{2}}}\sum_{\unimat \in N}\pi\left[\Delta^k(g)\ \cdot \unimat \right]\overline{\psi}\left(\unimat \right)\right)\\
={}  & \frac{1}{q^{\binom{k}{2}n^{2}}}\sum_{\unimat \in N}\mathrm{tr}\left(\pi\left[\Delta^k(g)\cdot \unimat\right]\right)\overline{\psi}\left(\unimat\right).
\end{split}
\end{equation}
Now we perform the change of variables
\begin{equation}
X_{i,j} \mapsto g^{-1}X_{i,j}, \qquad 1 \le i \le j \le k-1
\end{equation}
in \eqref{thetaexpansion1} and obtain
\begin{equation} \label{charformula}
\Theta_{k,N,\psi}\left(g\right)= \frac{1}{q^{\binom{k}{2}n^{2}}}\sum_{\unimat \in N}\Theta_{\theta}\left(h_{g;\unimat}\right)\overline{\psi}\left(g^{-1}X_{1,1},\ldots,g^{-1}X_{k-1,k-1}\right).
\end{equation}

In parts \S \ref{nonsubsem}, \S \ref{nonsemsub} and \S \ref{semsub} we prove parts (I), (II) and (III) of Theorem \ref{int_char_thm}, respectively.

\subsection{Character at $g = s\cdot u$ such that the semisimple part $s$ does not come from $\bff_n$} \label{nonsubsem}
Let $g = s\cdot u$. Assume that the semisimple part $s$ does not come from $\bff_n$. The semisimple part of $h_{g;U}$ is $\Delta^k(s)$, which also does not come from $\bff_n$. By Theorem \ref{greenthm}, we have $\Theta_\theta\left(h_{g;U}\right)=0$. Hence, by \eqref{charformula} $\Theta_{k,N,\psi}\left(g\right)=0$.
\qed

\subsection{Character calculation at a non-semisimple element} \label{nonsemsub}
Assume that $s$ comes from $\bff_d\subseteq \bff_n$ and $d\mid n$ is minimal. In addition, $d<n$ since $g$ is not semisimple.
Let $\lambda\in \bff_d^*$ be an eigenvalue of $s$ which generates the field $\bff_d$ over $\bff$. We use the notations of \S \ref{conj_class}. Thus, there exist $R\in\gl{n}$ and $\rho$ partition of $n/d$ such that $R^{-1}gR=L_\rho(f)$ and there exists $\Delta^{d^\prime}\left(T\right)\in \gl[\bff_d]{n}$ such that
\begin{equation}
g_{\rho}=\Delta^{d^\prime}\left(T^{-1}\right)R^{-1}gR\Delta^{d^\prime}\left(T\right),
\end{equation}
the analogue of the Jordan form of $g$.
Recall that by Lemma \ref{iso_f_to_fd}, the map $$A\mapsto A_\rho:=A_{\rho,R}=\Delta^{d^\prime}\left(T^{-1}\right)R^{-1}AR\Delta^{d^\prime}\left(T\right)$$ induces an isomorphism. By the notation of \S \ref{cong_arb_mat} we have for each
\begin{equation}
X_{a,b}, \qquad \forall 1 \le a \le b \le k-1,
\end{equation}
the corresponding isomorphism of Lemma \ref{iso_f_to_fd}
\begin{equation*}
X_{a,b}\mapsto
\begin{pmatrix}
\left(x^{(a,b)}_{0,i;j}\right)_{\substack{0\leq i \leq d-1\\ 0\leq j \leq d^\prime-1}}\\
\vdots\\
\left(x^{(a,b)}_{d^\prime-1,i;j}\right)_{\substack{0\leq i \leq d-1\\ 0\leq j \leq d^\prime-1}}
\end{pmatrix}.
\end{equation*}
Note that
\begin{equation}\label{bigiso}
 \Delta^k\left(\Delta^{d^\prime}\left(T^{-1}\right)\right)\Delta^k\left(R^{-1}\right) h_{g;\unimat} \Delta^k\left(R\right)\Delta^k\left( \Delta^{d^\prime}\left(T\right)\right)=h_{g_{\rho};\unimat_{\rho}},
\end{equation}
where $\unimat_{\rho}$ is the element of $N$ with $(X_{a,b})_{\rho}$ instead of $X_{a,b}$. From \eqref{bigiso} we obtain
\begin{equation}
\mathrm{rk}\left( h_{g-\lambda I_n;\unimat} \right)=\mathrm{rk}\left( h_{g_{\rho}-\lambda I_n;\unimat_\rho} \right).
\end{equation}
We prove that $\mathrm{rk}\left( h_{g-\lambda I_n;\unimat} \right)$ (which by Corollary \ref{greencor} determines the value of $\Theta _\theta\left( h_{g;\unimat} \right)$) is independent of $x^{(k-1,k-1)}_{1,0;1}\in\bff_d$.
The matrix $h_{g_\rho-\lambda I_n;\unimat_\rho}$ has the form
\begin{equation}\label{h_in_fd}
h_{g_{\rho}-\lambda I_n;\unimat_\rho}=  \begin{pmatrix}
		g_\rho-\lambda I_n & (X_{1,1})_\rho  & \cdots&(X_{1,k-2})_{\rho}&(X_{1,k-1})_{\rho} \\
		0 & g_\rho-\lambda I_n  & \cdots &(X_{2,k-2})_{\rho}&(X_{2,k-1})_{\rho} \\
		\vdots&\vdots&&\vdots&\vdots\\
		0 & 0 & \cdots & g_\rho-\lambda I_n &(X_{k-1,k-1})_{\rho}\\
		0 & 0 & \cdots & 0 &\boxed{g_\rho-\lambda I_n}
	\end{pmatrix}.
\end{equation}
Consider the boxed block in \eqref{h_in_fd}. The $2d\times 2d$ upper left block of the boxed matrix $g_\rho-\lambda I_n$ has the form
\begin{equation} \label{boxed1}
\begin{pmatrix}
0&&&\boxed{1}&&&\\&\lambda^q-\lambda&&&1&&\\&&\ddots&&&1&\\&&&\lambda^{q^{d-1}}-\lambda&&&1\\					&&&0&&&\\&&&&\lambda^q-\lambda&&\\&&&&&\ddots&\\&&&&&&\lambda^{q^{d-1}}-\lambda
\end{pmatrix}
\end{equation}
Let $Z:=X_{k-1,k-1}$, $Z_{\rho} := (X_{k-1,k-1})_{\rho}$ and $z_{m,i;j}:=x^{(k-1,k-1)}_{m,i;j}$. One can eliminate the $(d+1)$-th column in $Z_\rho$ by the boxed $1$ from \eqref{boxed1}, i.e. all the elements $\left\{z_{m,i;1}\right\}_{\substack{0\leq i\leq d-1 \\ 0\leq m\leq d^\prime-1}}$. In particular, $z_{1,0;1} = x^{(k-1,k-1)}_{1,0;1}$ is eliminated.
Now, by Lemma \ref{lem_tr_conj}, \eqref{charformula} can be written as
\begin{equation} \label{char_form_uni2_companion}
\begin{split}
	\Theta _{k,N,\psi }(g)  ={}&\frac{1}{q^ { \binom{k}{2} n^2 }} \sum \limits _{\unimat\in N} \Theta _\theta
\left( h_{g;\unimat} \right)\cdot  \prod_{i=1}^{k-2}\overline{\psi_0}\left(g^{-1}X_{i,i}\right) \\
&\quad \cdot \overline \psi_0 \left(\mathrm{Tr}_{\bff_{d}/\bff}\left(\lambda ^{-1} \cdot \sum \limits _{m=0} ^{d^\prime-1} z_{m,0;m}\right)+\mathrm{tr}\left(D^{-2}\alpha\left(g,D^{-1},Z_\rho\right)\right)
\right).
\end{split}
\end{equation}
By Lemma \ref{iso_f_to_fd}, going over $Z\in M_n(\bff)$ is equivalent to going over $\left( z_{m,i;j}\right)_{\substack{0\leq i \leq d-1\\ 0\leq j,m \leq d^\prime-1}}$, $z_{m,i;j}\in\bff_d$. We have just shown that $\Theta _\theta\left( h_{g;\unimat} \right)$ is independent of $z_{1,0;1}$, and by Lemma \ref{lem_tr_conj} $\mathrm{tr}\left(D^{-2}\alpha\left(g,D^{-1},Z_\rho\right)\right)$ in \eqref{char_form_uni2_companion} is also independent of $z_{1,0;1}$. Thus, we may write \eqref{char_form_uni2_companion} as the following double sum, where the inner sum is over $z_{1,0;1}$ and the outer sum is over the rest of the coordinates of $U$:
\begin{equation} \label{char_form_uni2_companion2}
\begin{split}
	\Theta _{k,N,\psi }(g)  ={}&\frac{1}{q^ { \binom{k}{2} n^2 }} \sum \limits _{ \substack{X_{i,j}\in N, (i,j) \neq (k-1,k-1) \\ z_{m,i;j} \in \bff_d, (m,i,j) \neq (1,0,1)}} \Theta _\theta
\left( h_{g;\unimat} \right)\cdot  \prod_{i=1}^{k-2}\overline{\psi_0}\left(g^{-1}X_{i,i}\right) \\
&\quad \cdot  \overline \psi_0 \left(\mathrm{tr}\left(D^{-2}\alpha\left(g,D^{-1},Z_\rho\right)\right) \right)  \cdot \overline \psi_0 \left(\mathrm{Tr}_{\bff_{d}/\bff}\left(\lambda ^{-1} \cdot \sum\limits _{\substack{0 \le m \le d'-1 \\ m \neq 1}} z_{m,0;m}\right)\right) \\
&\quad \cdot \sum_{z_{1,0;1} \in \bff_d} \overline{\psi_0} \left(\mathrm{Tr}_{\bff_{d}/\bff}\left(\lambda ^{-1} \cdot z_{1,0;1}\right)\right).
\end{split}
\end{equation}
Since $\overline \psi_0\circ \mathrm{Tr}_{\bff_{d}/\bff}$ is a nontrivial character, we have
\begin{equation}
	\sum \limits _{z_{1,0;1}\in \bff_d}\overline \psi_0 \left(\mathrm{Tr}_{\bff_{d}/\bff}\left(\lambda ^{-1} \cdot z_{1,0;1}\right)
	\right)=0.
\end{equation}
Thus, $\Theta _{k,N ,\psi }(g) =0$.
\qed 

\subsection{Character calculation at a semisimple element} \label{semsub}
Here we will use \eqref{charformula} to calculate the value of $\Theta_{k,N,\psi}(g)$ for $g=s$ where $s$ is semisimple element which comes from a subfield of $\bff_{n}$ ($u=I_n$). Again, we use the notations of \S \ref{conj_class}. Thus, there exist $R\in\gl{n}$, $\rho$ partition of $n/d$ and $\Delta^{d^\prime}\left(T\right)\in \gl[\bff_d]{n}$ such that
\begin{equation} \label{defrts}
s_{\rho}=\Delta^{d^\prime}\left(T^{-1}\right)R^{-1}sR\Delta^{d^\prime}\left(T\right),
\end{equation}
the analogue of the Jordan form of $s$. We also use the notations of \S \ref{cong_arb_mat}, and in particular define $(X_{a,b})_{\rho}$ as in \S \ref{nonsemsub}.

Let $\lambda\in \bff_n^*$ be an eigenvalue of $s$.
If $\lambda\in\bff^*$ then $s=\lambda I$, and we have by \eqref{charformula}
\begin{equation}
	\Theta _{k,N,\psi }(\lambda I)  ={}\frac{1}{q^ { \binom{k}{2} n^2 }} \sum \limits _{\unimat\in N} \Theta _\theta
\left( h_{\lambda I;\unimat} \right)  \overline{\psi}\left(\lambda^{-1}X_{1,1}, \ldots,\lambda^{-1}X_{k-1,k-1}\right).
\end{equation}
By the change of variables
\[X_{i,j}\mapsto \lambda X_{i,j},\]
we get
\begin{equation*}
	\Theta_{k,N,\psi}\left(\lambda I\right)= \frac{1}{q^{\binom{k}{2}n^{2}}}\sum_{U\in N}\Theta_{\theta}\left(\lambda h_{I;X,Y,Z}\right)\overline{\psi}\left(X_{1,1},\ldots,X_{k-1,k-1}\right).
\end{equation*}
By Theorem \ref{greenthm}, we have $\Theta_{\theta}\left(\lambda\cdot  h_{I;U}\right)=\theta(\lambda)\Theta_{\theta}\left(h_{I;U}\right)$, and so
\begin{equation} \label{char_on_center}
\Theta_{k,N,\psi}\left(\lambda I\right)=\theta(\lambda)\Theta_{k,N,\psi}\left(I\right)=\theta(\lambda)\mathrm{dim}\left(\pi_{k,N,\psi}\right).
\end{equation}
By Theorem \ref{int_dim}, this proves the case  $\lambda\in\bff^*$.

If $\lambda \in \bff^*_{d} \subseteq \bff^*_{n}$ is an eigenvalue of $s$ and  $1<d\mid n$ is such that $\bff_d$ is generated by $\lambda$ over $\bff$, we have by \eqref{charformula}
\begin{equation} \label{char_semi_form}
\Theta_{k,N,\psi}\left(s\right)= \frac{1}{q^{\binom{k}{2}n^{2}}}\sum_{U\in N}\Theta_{\theta} \left(h_{s;U}\right) \overline{\psi}\left(s^{-1}X_{1,1},\ldots,s^{-1}X_{k-1,k-1}\right).
\end{equation}
In order to compute $\Theta_{\theta}(h_{s;U})$, we need to find conditions for $X_{i,j}$, such that $h_{s;U}$ will have a fixed number of Jordan blocks. This is equivalent to saying that $h_{s;U}-\lambda I_{kn}$ will have a given kernel dimension, or a given rank.
Rank and trace are invariant under conjugation, so let us denote by $h_{s_{\rho},U_{\rho}}$, the matrix $h_{s;U}$ conjugated by $\Delta^k(R)\Delta^k\left( \Delta^{d^\prime}\left(T\right)\right)$, where $R$ and $T$ are defined by $s$ in \eqref{defrts}:
\begin{equation*}
h_{s_\rho;U_\rho} := \Delta^k\left( \Delta^{d^\prime}\left(T^{-1}\right)\right)\Delta^k\left( R^{-1}\right) h_{s;U} \Delta^k(R)\Delta^k\left( \Delta^{d^\prime}\left(T\right)\right).
\end{equation*}
We have a matrix in $\gl[\bff_d]{kn}$ and our goal is to find out how many matrices of the form
\begin{equation*}
h_{s_\rho;U_\rho} -\lambda I_{kn}= h_{s_{\rho}-\lambda I_n;U_\rho},
\end{equation*}
where $U$ varies, have a given rank $\ell$.

First, notice that by the invariance of rank under elementary row and column operations on $h_{s_{\rho}-\lambda I_n;U_\rho}$, we can use the nonzero elements on the diagonal of $s_\rho-\lambda I_n$ to cancel the corresponding elements of $(X_{a,b})_\rho$.  These elementary operations map the sequence of matrices $\{(X_{a,b})_\rho \}_{1 \le a \le b \le k-1}$ $\bff_d$-linearly to the sequence
\begin{equation}
\Big\{ (\widehat{X}_{a,b})_\rho = \begin{pmatrix}
x^{(a,b)}_{0,0;0} & \cdots & x^{(a,b)}_{{d^\prime-1},0;0} \\
\vdots & \ddots & \vdots  \\
x^{(a,b)}_{0,0;d^\prime-1} & \cdots & x^{(a,b)}_{{d^\prime-1},0;d^\prime-1}
\end{pmatrix} \in M_{d^\prime}\left(\bff_d\right) \Big\}_{1 \le a \le b \le k-1}.
\end{equation}
The dimension of the kernel of this map is $\binom{k}{2}(n-d')d'$, corresponding to the number of elements we canceled. Hence, the number of matrices $h_{s_{\rho}-\lambda I_n;U_\rho}$ of rank $\ell$ is $(q^{d})^{\binom{k}{2}(n-{d^\prime}){d^\prime}}$ times the number of matrices of the form
\begin{equation} \label{asemicanceled}
    A:=\begin{pmatrix}
	    (\widehat{X}_{1,1})_\rho  & \cdots&(\widehat{X}_{1,k-2})_{\rho}&(\widehat{X}_{1,k-1})_{\rho} \\
	    0  & \cdots &(\widehat{X}_{2,k-2})_{\rho}&(\widehat{X}_{2,k-1})_{\rho} \\
		\vdots&&\vdots&\vdots\\
		0 & \cdots & 0 &(\widehat{X}_{k-1,k-1})_{\rho}
	\end{pmatrix}\in M_{(k-1)d^\prime}(\bff_d)
\end{equation}
of rank $\ell-k(n-d^\prime)$.
According to the character formula \eqref{green}, we can calculate $\Theta_\theta(h_{s;\unimat})$. In this case $m=kn$, $g=h_{s;\unimat}$  and
$$t=\mathrm{dim}\ \mathrm{ker}(h_{s;\unimat}-I)=kn-\mathrm{rk}(h_{s;\unimat}-I)=kn-k(n-d^\prime) -\mathrm{rk}A=kd^\prime -\mathrm{rk}A.$$
Thus
\begin{align} \label{char_semi_1}
\Theta_\theta \left(h_{s;\unimat}\right) ={}& (-1)^{kn-1}\left[ \sum \limits _{i=0}^{d-1} \theta(\lambda^{q^i})\right] (1-q^d)(1-(q^d)^2)\cdots(1-(q^d)^{kd^\prime  -\mathrm{rk}A-1})\nonumber\\
={}&(-1)^{kn-1}\left[ \sum \limits _{i=0}^{d-1} \theta(\lambda^{q^i})\right] (q^d;q^d)_{kd^\prime -\mathrm{rk}A-1}.
\end{align}
Now, by \eqref{char_semi_1} and Lemma \ref{lem_tr_conj}, \eqref{char_semi_form} can be written as
\begin{equation}\label{semisum}
\begin{split}
\Theta_{k,N,\psi}\left(s\right)= \frac{(-1)^{kn-1}(q^{d})^{\binom{k}{2}(n-{d^\prime}){d^\prime}}}{q^{\binom{k}{2}n^{2}}}&\left[ \sum \limits _{i=0}^{d-1} \theta(\lambda^{q^i})\right]\sum_{A}(q^d;q^d)_{kd^\prime -\mathrm{rk}A-1}\\&\cdot\prod_{i=1}^{k-1} \overline \psi_0 \left(\mathrm{Tr}_{\bff_{d}/\bff}\left(\lambda ^{-1} \cdot \sum \limits _{m=0} ^{d^\prime-1} x^{(i,i)}_{m,0;m}\right)\right),
\end{split}
\end{equation}
where the sum is over matrices $A$ as in \eqref{asemicanceled}. By the character formula \eqref{green}, the RHS of \eqref{semisum} is $(-1)^{k(n-d^\prime)}\left[ \sum \limits _{i=0}^{d-1} \theta(\lambda^{q^i})\right]$ times the RHS of \eqref{splitsumdim}, when one replaces $n$ with ${d^\prime}$, $q$ with $q^d$ and $\psi_0$ with
\begin{equation}
\psi^{\prime}_0:\bff_d \to \mathbb{C}^{*}, \quad \psi^{\prime}_0(x)=\psi_0 \Big( \mathrm{Tr}_{\bff_{d}/\bff}(\lambda ^{-1}x) \Big).
\end{equation}
Thus, the RHS of \eqref{semisum} is equal to $\mathrm{dim}\left(\pi_{k,N,\psi}\right)$ (which was calculated in Theorem \ref{st_thm}) after the substitution of $n,q,\psi_0$ with the relevant values. Hence,
\begin{equation}
\begin{split}
\Theta_{k,N,\psi}\left(s\right)=(-1)^{k(n-d^\prime)}\left[ \sum \limits _{i=0}^{d-1} \theta(\lambda^{q^i})\right](q^d)^{(k-2) \frac{d^\prime(d^\prime-1)}{2}}\frac{\left|\gl[\bff_d]{d^\prime}\right|}{q^n-1},
\end{split}
\end{equation}
as desired.
\qed 

\section{Proof of Theorem \ref{mainthrm}} \label{mainth_sec}

	Notice first that by part (III) of Lemma \ref{lemand}, the coefficients in both \eqref{mainth1} and \eqref{mainth2} are positive integers, unless $k=2$ in which case they may also be zero.
	
	Representations of a finite group are equivalent if the corresponding characters coincide. Hence, both parts of the theorem are equivalent to
	\begin{equation}\label{deq}
	\forall g \in \gl{n}: \, \Theta_{k;N,\psi}(g) = \sum_{\ell \mid n} a_{k;n,\ell}(q) \cdot \Theta_{\mathrm{Ind}_\ell}(g).
	\end{equation}
	We prove now \eqref{deq} for any $g\in\gl{n}$.
	If $g$ is not semisimple or does not come from $\bff_n$ then the LHS of \eqref{deq} is zero by parts (I) and (II) of Theorem \ref{int_char_thm}. The RHS of \eqref{deq} is also zero on such elements by Lemma \ref{lemindch}.
	
	Let $g$ be a semisimple element, which comes from $\bff_d\subseteq \bff_n$ and $d\mid n$ is minimal. Let $\lambda$ be an eigenvalue of $s$, which generates $\bff_d$ over $\bff$. For such $g$, part (III) of Theorem \ref{int_char_thm} and Lemma \ref{lemindch} imply that \eqref{deq} is equivalent to
	\begin{equation} \label{deq1}
	(-1)^{k(n-{d^\prime})} \left[ \sum \limits _{i=0}^{d-1} \theta(\lambda^{q^i})\right] q^{(k-2)\frac{n({d^\prime}-1)}{2}}\cdot  \frac{\left|\mathrm{GL}_{{d^\prime}}(\bff_{d})\right|}{q^n-1}=
	\sum_{\ell:\ d\mid \ell \mid n} a_{k;n,\ell}(q) \frac{\left|\gl[\bff_d]{d^\prime}\right|}{q^\ell-1}\left[\sum\limits_{i=0}^{d-1}\theta(\lambda^{q^i})\right],
	\end{equation}
	where $d^\prime=n/d$. Proving the following identity will establish \eqref{deq1}:
	\begin{equation} \label{deq2}
	\frac{(-1)^{k(n-{d^\prime})} q^{(k-2)\frac{n({d^\prime}-1)}{2}}}{q^n-1}=
	\sum_{\ell:\ d\mid \ell \mid n} \frac{a_{k;n,\ell}(q)}{q^\ell-1}.
	\end{equation}
	Using \eqref{anlq}, the RHS of \eqref{deq2} is
	\begin{equation} \label{reqiden}
	 \sum_{\ell:\ d\mid \ell \mid n} \sum_{m: \, \ell \mid m \mid n} \frac{\mu\left(\frac{m}{\ell}\right) (-1)^{k(n- \frac{n}{m})} q^{(k-2)\frac{n}{2} (\frac{n}{m}-1)}}{q^n-1}.
	\end{equation}
	We simplify \eqref{reqiden} using \eqref{mobius_prop}:
	\begin{equation}
	\begin{split}
	 \sum_{\ell:\ d\mid \ell \mid n} \sum_{m: \, \ell \mid m \mid n} &\frac{\mu\left(\frac{m}{\ell}\right) (-1)^{k(n- \frac{n}{m})} q^{(k-2)\frac{n}{2} (\frac{n}{m}-1)}}{q^n-1}\\
	 &= \sum_{m: \, d \mid m \mid n} \frac{(-1)^{k(n-\frac{n}{m})} q^{(k-2)\frac{n}{2} (\frac{n}{m}-1)}}{q^n-1} \sum_{\ell:\ d\mid \ell \mid m} \mu\left(\frac{m}{\ell}\right)\\
	&=\sum_{m: \, d \mid m \mid n} (-1)^{k(n-\frac{n}{m})} \frac{ q^{(k-2)\frac{n}{2} (\frac{n}{m}-1)}}{q^n-1}\delta_{d,m}\\
	&= (-1)^{k(n-\frac{n}{d})} \frac{ q^{(k-2)\frac{n}{2} (\frac{n}{d}-1)}}{q^n-1},
	\end{split}
	\end{equation}	
	which is the LHS of \eqref{deq2}. Hence the proof is completed.
\section{Proof of Theorem \ref{st_thm}} \label{st_sec}

	Representations of a finite group are equivalent if the corresponding characters coincide. Hence, the theorem is equivalent to
	\begin{equation}\label{st_char_eq}
	\forall g \in \gl{n}: \,\quad \Theta_{k,N,\psi}(g) = \Theta_{\theta\upharpoonright_{\bff^*_n}}(g) \cdot \mathrm{St}^{k-1}(g),
	\end{equation}
	where we use the notation $\mathrm{St}$ also for the character of the Steinberg representation.
	We prove now \eqref{st_char_eq} for any $g\in\gl{n}$.
	
	We first prove \eqref{st_char_eq} for $k=1$. Note that $N=\left\{I_n\right\}$ and so
	\begin{equation}
	V_{\pi_{1,N,\psi}}=\left\{ v\in V_{\pi_\theta}\ \left|\ \pi(I_n)v=v\right.\right\}=V_{\pi_\theta} .
	\end{equation}
	Hence $\pi_{1,N,\psi}(g) = \pi_{\theta}(g)$ as needed.
	
	Now assume $k \ge 2$. If the semisimple part $s$ of $g$ does not come from $\bff_n$, or $g$ is not semisimple, then $\Theta_{k,N,\psi}(g)=0$ by Theorem \ref{int_char_thm}. From Theorem \ref{greenthm}, we have $\Theta_{\theta\upharpoonright_{\bff^*_n}}(g) = 0$. Hence, \eqref{st_char_eq} is proved in that case.
	
	Otherwise, $g=s$ is a semisimple element which comes from $\bff_d \subseteq \bff_n$ and $d \mid n$ is minimal. We begin by calculating the character value $\mathrm{St}(g)$. For any prime $p$, let $m_p$ be the $p$-part of $m$. By \cite[Thm.~6.5.9]{carter1993},
	\begin{equation}
	\mathrm{St}(g)=\varepsilon_{\gl[\overline{\bff}]{n}} \varepsilon_{C(g)^{\circ}} \left| C(g)^{\bff} \right|_{\mathrm{char}(\bff)},
	\end{equation}
	where $\varepsilon_G$ is $(-1)$ to the power of the $\bff$-rank of $G$, $C(g)$ is the centralizer of $g$ in $\gl[\overline{\bff}]{n}$, $C(g)^{\circ}$ is its identity component and $C(g)^{\bff}$ is the subgroup of $\bff$-rational points in $C(g)$. The $\bff$-rank of $\gl[\overline{\bff}]{n}$ is $n$. Let $\rho = \left(1^{n/d}\right)$, a partition of $d'=\frac{n}{d}$ and let $f$ be the characteristic polynomial of $s$.
	 By \S \ref{anal_jord_sec}, the centralizer $C(g)^{\bff}$ is isomorphic to $C(L_{f,\rho})^{\bff}$, which in turn is isomorphic to $\gl[\bff_{d}]{d'}$ (cf. \cite[Lem.~2.4]{green1955} and the discussion preceding it). Thus,  $\varepsilon_{C(g)^{\circ}} =\varepsilon_{\gl[\overline{\bff}]{d'}}= (-1)^{d'}$ and
	\begin{equation}
	\left| C(g)^{\bff} \right| =q^{\sum_{i=1}^{d'} d(d'-i)} \prod_{k=1}^{d'}\left(q^{d k}-1\right), \quad \left| C(g)^{\bff} \right|_{\mathrm{char}(\bff)} = q^{\frac{n(d'-1)}{2}}.
	\end{equation}
	The discussion shows that	
	\begin{equation}\label{stgval}
	\mathrm{St}(g)=(-1)^{n-d'}q^{ \frac{n(d'-1)}{2}}.
	\end{equation}	
	By Theorem \ref{greenthm},
	\begin{equation}\label{t2re}
		\begin{split}
		\Theta_{\theta\upharpoonright_{\bff^*_n}}(g)&=(-1)^{n-1}\left[\sum\limits_{\alpha=0}^{d-1}\theta(\lambda^{q^\alpha})\right](1-q^d)(1-({q^d})^2)\cdots(1-({q^d})^{d'-1})\\
		&= (-1)^{n-d'} \left[\sum\limits_{\alpha=0}^{d-1}\theta(\lambda^{q^\alpha})\right](q^d-1)(q^{2d}-1)\cdots(q^{n-d} -1) \frac{q^n-1}{q^n-1}\\
		&= (-1)^{n-d'} \left[\sum\limits_{\alpha=0}^{d-1}\theta(\lambda^{q^\alpha}) \right] \frac{\left|\mathrm{GL}_{{d^\prime}}(\bff_{d}) \right|}{(q^n-1) q^{\frac{n(d'-1)}{2}}},
		\end{split}
			\end{equation}
	where $\lambda$ is an eigenvalue of $g$. By Theorem \ref{int_char_thm}
	\begin{equation}\label{t2p}
	\Theta_{k,N,\psi}(g) = (-1)^{k(n-{d^\prime})}  q^{(k-2)\frac{n({d^\prime}-1)}{2}} \cdot \left[ \sum \limits _{i=0}^{d-1} \theta(\lambda^{q^i})\right] \cdot  \frac{\left|\mathrm{GL}_{{d^\prime}}(\bff_{d})\right|}{q^n-1}.
	\end{equation}
	Multiplying \eqref{t2re} by \eqref{stgval} raised to the $(k-1)$-th power, we get \eqref{t2p} as needed.	

\section*{Acknowledgments}
We are grateful to the second author's advisor, David Soudry, for suggesting the problem and for many helpful discussions during our work on the case $k=3$.
	
	We are thankful to Dipendra Prasad for interesting discussions. We are indebted to Dror Speiser for useful conversations, and in particular for suggesting the link with the Steinberg representation.

\bibliographystyle{alpha}
\bibliography{references}

\end{document}